\newtheorem{definition}{Definition}
\newtheorem{proposition}{Proposition}
\newtheorem{example}{Example}
\begin{document}

\title{Relational PK-Nets for Transformational Music Analysis}

\author{Alexandre Popoff}
\address{119 Rue de Montreuil, 75011 Paris and IRCAM}
\email{al.popoff@free.fr}

\author{Moreno Andreatta}
\address{IRCAM/CNRS/UPMC and IRMA-GREAM, Universit\'e de Strasbourg}
\email{Moreno.Andreatta@ircam.fr, andreatta@math.unistra.fr}

\author{Andr\'ee Ehresmann}
\address{Universit\'e de Picardie, LAMFA}
\email{andree.ehresmann@u-picardie.fr}

\subjclass[2010]{00A65}
\keywords{Transformational music theory, Klumpenhouwer network, relations, category theory}

\begin{abstract}
In the field of transformational music theory, which emphasizes the possible transformations between musical objects, Klumpenhouwer networks (K-Nets) constitute a useful framework with connections in both group theory and graph theory. Recent attempts at formalizing K-Nets in their most general form have evidenced a deeper connection with category theory. These formalizations use diagrams in sets, i.e. functors $\mathbf{C} \to \mathbf{Sets}$ where $\mathbf{C}$ is often a small category, providing a general framework for the known group or monoid actions on musical objects. However, following the work of Douthett and Cohn, transformational music theory has also relied on the use of relations between sets of the musical elements. Thus, K-Net formalizations have to be further extended to take this aspect into account. This work proposes a new framework called relational PK-Nets, an extension of our previous work on Poly-Klumpenhouwer networks (PK-Nets), in which we consider diagrams in $\mathbf{Rel}$ rather than $\mathbf{Sets}$. We illustrate the potential of relational PK-Nets with selected examples, by analyzing pop music and revisiting the work of Douthett and Cohn.
\end{abstract}

\maketitle

\section{From K-Nets and PK-Nets to relational PK-Nets}

We begin this section by recalling the definition of a Poly-Klumpenhouwer Network (PK-Net), and then discuss about its limitations as a motivation for introducing relational PK-Nets. 

\subsection{The categorical formalization of Poly-Klumpenhouwer Networks (PK-Nets)}

Following the work of Lewin \cite{Lewin1982,Lewin1987}, transformational music theory has progressively shifted the music-theoretical and analytical focus from the ``object-oriented" musical content to the operational musical process. As such, transformations between musical elements are emphasized. In the original framework of Lewin, the set of transformations often form a group, with a corresponding group action on the set of musical objects. Within this framework, Klumpenhouwer Networks (henceforth K-nets) have stressed the deep synergy between set-theoretical and transformational approaches thanks to their anchoring in both group and graph theory, as observed by many scholars \cite{Nolan2007}. We recall that a K-Net is informally defined as a labelled graph, wherein the labels of the vertices belong to the set of pitch classes, whereas arrows are labelled with possible transformations between these pitch classes. An example of a K-Net is given in Figure \ref{fig:KNet}. Klumpenhouwer networks allow one to conveniently visualize at once the musical elements and the possible transformations between them.

Following David Lewin's \cite{Lewin1990} and Henry Klumpenhouwer's \cite{Klumpenhouwer1991} original group-theoretical description, theoretical studies have mostly focused until now on the underlying algebraic methods related to the automorphisms of the T/I group or of the more general T/M affine group \cite{Lewin1990}, \cite{Klumpenhouwer1998}. Following the very first attempt by Mazzola and Andreatta at formalizing K-nets in a more general categorical setting as limits of diagrams within framework of denotators \cite{Mazzola-Andreatta}, we have recently proposed a categorical construction, called Poly-Klumpenhouwer Networks (henceforth PK-nets), which generalizes the notion of K-nets in various ways \cite{PopoffMCM2015,Popoff2016}.

\begin{figure}
\begin{center}
\begin{tikzpicture}[scale=1]
	\node (A) at (0,0) {$C$};
	\node (B) at (1,-1.3) {$E$};
	\node (C) at (0,-2.3) {$G$};
	\draw[->,>=latex] (A) to node[shift={(0.3,0.15)}]{$I_4$} (B) ;
	\draw[->,>=latex] (B) to node[shift={(0.3,-0.15)}]{$T_3$} (C) ;
	\draw[->,>=latex] (A) to node[left,midway]{$I_7$} (C) ;
\end{tikzpicture}
\end{center}
\caption{A Klumpenhouwer network (K-Net) describing a major triad. The arrows are labelled with specific transformations in the $T/I$ group relating the pitch classes.}
\label{fig:KNet}
\end{figure}
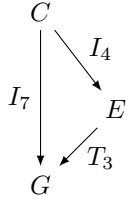

We begin by recalling the definition of a PK-Net, introduced first in \cite{PopoffMCM2015}.

\begin{definition}
Let $\mathbf{C}$ be a category, and $S$ a functor from $\mathbf{C}$ to the category $\mathbf{Sets}$. Let $\Delta$ be a small category and $R$ a functor from $\Delta$ to $\mathbf{Sets}$ with non-void values. A PK-net of form $R$ and of support $S$ is a 4-tuple $(R,S,F,\phi)$, in which 
\begin{itemize}
\setlength\itemsep{0.05em}
\item{
$F$ is a functor from $\Delta$ to $\mathbf{C}$,}
\item{and $\phi$ is a natural transformation from $R$ to $SF$.}
\end{itemize}
\end{definition}

A PK-net $(R,S,F,\phi)$ can be summed up by the following diagram. We detail below the importance of each element of this diagram with respect to transformational music analysis.

\begin{center}
\begin{tikzpicture}
	\node (A) at (0,0) {$\Delta$};
	\node (B) at (3,0) {$\mathbf{C}$};
	\node (C) at (1.5,-2) {$\mathbf{Sets}$};
	\draw[->,>=latex] (A) -- (B) node[above,midway]{$F$};
	\draw[->,>=latex] (B) -- (C) node[right,midway] (D) {$S$} ;
	\draw[->,>=latex] (A) -- (C) node[left,midway]  (E) {$R$};
	
	\draw[->,>=latex,dashed] (E) to[bend left] node[above,midway]{$\phi$} (D) ;
\end{tikzpicture}
\end{center}

The category $\mathbf{C}$ and the functor $S \colon \mathbf{C} \to \mathbf{Sets}$ represent the context of the analysis. Traditional transformational music theory commonly relies on a group acting on a given set of objects: the T/I group acting on the set of the twelve pitch classes, the same T/I group acting simply transitively on the set of the 24 major and minor triads, or the PLR group acting simply transitively on the same set, to name a few examples. From a categorical point of view, the data of a group and its action on a set is equivalent to the data of a functor from a single-object category with invertible morphisms to the category of sets. However, this situation can be further generalized by considering any category $\mathbf{C}$ along with a functor $S \colon \mathbf{C} \to \mathbf{Sets}$. The morphisms of the category $\mathbf{C}$ are therefore the musical transformations of interest. For example, Noll \cite{Noll2005} has studied previously a monoid of eight elements and its action on the set of the twelve pitch classes: this can be considered as a single-object category $\mathbf{C}$ with eight non-invertible morphisms along with its corresponding functor $S \colon \mathbf{C} \to \mathbf{Sets}$, where the image of the only object of $\mathbf{C}$ is the set of the twelve pitch classes. 

The category $\Delta$ serves as the abstract skeleton of the PK-Net: as such, its objects and morphisms are abstract entities, which are labelled by mean of the functor $F$ from $\Delta$ to the category $\mathbf{C}$. By explicitly separating the categories $\Delta$ and $\mathbf{C}$, we allow for the same PK-Net skeleton to be interpreted in different contexts. For example, a given category $\mathbf{C}$ may describe the relationships between pitch classes, while another category $\mathbf{C'}$ may describe the relationships between time-spans \cite{Lewin1987}. Different functors $F \colon \Delta \to \mathbf{C}$ or $F' \colon \Delta \to \mathbf{C'}$ will then label the arrows of $\Delta$ differently with transformations from $\mathbf{C}$ or $\mathbf{C'}$, depending on whether the PK-Net describes pitch-classes or time-spans. Two PK-Nets may actually be related by different kinds of \textit{morphisms of PK-Nets}, which have been described previously \cite{PopoffMCM2015,Popoff2016}.

Note that the objects of $\Delta$ do not represent the actual musical elements of a PK-Net: these are introduced by the functor $R$ from $\Delta$ to $\mathbf{Sets}$. This functor sends each object of $\Delta$ to an actual set, which may contain more than a single element, and whose elements abstractly represent the musical objects of study. However, these elements are not yet labelled. In the same way the morphisms of $\Delta$ represent abstract relationships which are given a concrete meaning by the functor $F$, these elements are labelled by the natural transformation $\phi$. The elements in the image of $S$ represent musical entities on which the category $\mathbf{C}$ acts, and one would therefore need a way to connect the elements in the image of $R$ with those in the image of $S$. However, one cannot simply consider a collection of functions between the images of $R$ and the images of $S$ in order to label the musical objects in the PK-Net. Indeed, one must make sure that two elements in the images of $R$ which are related by a function $R(f)$ (with $f$ being a morphism of $\Delta$) actually correspond to two elements in the images of $S$ related by the function $SF(f)$. The purpose of the natural transformation $\phi$ is thus to ensure the coherence of the whole diagram.

While PK-Nets have been so far defined in $\mathbf{Sets}$, there is \textit{a priori} no restriction on the category that should be used. For example, the approach of Mazzola and Andreatta in \cite{Mazzola-Andreatta} uses modules and categories of presheaves. As noticed in \cite{PopoffMCM2015}, PK-Nets could also be defined in such categories. Alternatively, the purpose of this paper is to consider \textit{relational PK-Nets} in which the category $\mathbf{Sets}$ of sets and functions between them is replaced by the category $\mathbf{Rel}$ of sets and binary relations between them. The reasons for doing so are detailed in the next sections.

\subsection{Limitations of PK-Nets}

The definition of PK-Nets introduced above leads to musical networks of greater generality than traditional Klumpenhouwer networks, as was shown previously in \cite{PopoffMCM2015,Popoff2016}. In particular, it allows one to study networks of sets of different cardinalities, not necessarily limited to singletons. We recall here a prototypical example showing how a dominant seventh chord may be obtained from the transformation of an underlying major chord, with an added seventh.

\begin{example}
Let $\mathbf{C}$ be the $T/I$ group, considered as a single-object category, and consider its natural action on the set $\mathbb{Z}_{12}$ of the twelve pitch classes (with the usual semi-tone encoding), which defines a functor $S \colon T/I \to \mathbf{Sets}$.
Let $\Delta$ define the order of the ordinal number $\mathbf{2}$, i.e. the category with two objects $X$ and $Y$ and precisely one morphism $f \colon X \to Y$, and consider the functor $F \colon \Delta \to T/I$ which sends $f$ to $T_4$. 

Consider now a functor $R \colon \Delta \to \mathbf{Sets}$ such that $R(X)=\{x_1,x_2,x_3\}$ and $R(Y)=\{y_1,y_2,y_3,y_4\}$, and such that $R(f)(x_i)=y_i$, for $1 \leq i \leq 3$. Consider the natural transformation $\phi$ such that $\phi_X(x_1)=C$, $\phi_X(x_2)=E$, $\phi_X(x_3)=G$, and $\phi_Y(y_1)=E$, $\phi_Y(y_2)=G_{\sharp}$, $\phi_Y(y_3)=B$, and $\phi_Y(y_4)=D$. Then $(R,S,F,\phi)$ is a PK-net of form $R$ and support $S$ which describes the transposition of the $C$ major triad to the $E$ major triad subset of the dominant seventh $E^7$ chord. 
\end{example}

However, one specific limitation of this approach appears quickly: whereas transformations between sets of increasing cardinalities can easily be modeled in this framework, transformations between sets of decreasing cardinalities sometimes cannot. Consider for example a PK-Net $(R,S,F,\phi)$ with a category $\Delta$ with at least two objects $X$ and $Y$ and a morphism $f \colon X \to Y$ between them, a functor $R \colon \mathbf{C} \to \mathbf{Sets}$ such that $\left\vert{R(X)}\right\vert>1$ and $\left\vert{R(Y)}\right\vert>1$, and a group $\mathbf{C}$ with a functor $S \colon \mathbf{C} \to \mathbf{Sets}$. Consider two elements $x_1$ and $x_2$ of $R(X)$, an element $y$ of $R(Y)$, and assume that we have $R(f)(x_1)=R(f)(x_2)=y$. By definition of the PK-Net, we must have $SF(f)(\phi_X(x_1))=SF(f)(\phi_X(x_2))=\phi_Y(y)$, and since $\mathbf{C}$ is a group this imposes $\phi_X(x_1)=\phi_X(x_2)$, i.e. the two musical objects $x_1$ and $x_2$ must have the same labels.

As an example, there is no possibility to define a PK-Net showing the inverse transformation from a dominant seventh $E^7$ chord to a $C$ major triad. If one tries to define a new PK-Net $(R',S,F',\phi')$ such that $F'(f)=T_8$, $R'(X)=\{x_1,x_2,x_3,x_4\}$, $R'(Y) = \{y_1,y_2,y_3\}$, and with a natural transformation $\phi'$ such that $\phi'_X(x_1)=E$, $\phi'_X(x_2)=G_{\sharp}$, $\phi'_X(x_3)=B$, and $\phi'_Y(y_1)=C$, $\phi_Y(y_2)=E$, $\phi_Y(y_3)=G$, then one quickly sees that no function $R'(f)$ can exist which would satisfy the requirement that $\phi'$ is a natural transformation from $R$ to $SF'$, since all the elements constituting the seventh chord have different labels in $\mathbb{Z}_{12}$.

In a possible way to resolve this problem, and from the point of view that the $E^7$ dominant seventh chord consists of an $E$ major chord with an added $D$ note, we would intuitively like to ``forget'' about the $D$ and consider only the transformation of $x_1$, $x_2$, and $x_3$ in $R(X)$ to $y_1$, $y_2$, and $y_3$ respectively in $R'(Y)$. In other words, we would like to consider \textit{partial functions} between sets, instead of ordinary ones. In order to do so, one must abandon the category $\mathbf{Sets}$ and choose a category which makes it possible to consider such morphisms. This simple example motivates the introduction in this paper of \textit{relational PK-Nets} based on the category $\mathbf{Rel}$ of sets and binary relations between them. Although there also exists a category $\mathbf{Par}$ whose objects are sets and morphisms are partial functions between them, the use of $\mathbf{Rel}$ includes the case of partial functions as well as even more general applications, as will be seen in the next sections.

\subsection{The use of relations in transformational music theory}

The use of relations between musical objects figures prominently in the recent literature on music theory. Perhaps one of the most compelling examples is the work of Douthett and Cohn on parsimonious voice-leading and its subsequent formalization in the form of parsimonious graphs \cite{Douthett1998,Cohn2012}. In order to formalize the notion of parsimony, Douthett introduced in \cite{Douthett1998} the $\mathcal{P}_{m,n}$ relation between two pc-sets, the definition of which we recall here. We recall that a \textit{pitch class set (pc-set)} is a set in $\mathbb{Z}_{12}$ (which encodes the different pitch classes with the usual semitone encoding).

\begin{definition}
Let $O=\{x_k, 0 \leq k \leq l\}$ and $O'=\{y_k, 0 \leq k \leq l\}$ be two pc-sets of equal cardinality $l$. We say that $O$ and $O'$ are $\mathcal{P}_{m,n}$-related if there exists a set $Z = \{z_k, 0 \leq k \leq m+n-1\}$ and a bijection $\tau \colon O \to O'$ such that we have $O-O'=Z$, $\tau(x_i)=y_i$ if $x_i \in O \cap O'$, and 
\begin{itemize}
\item{$\tau(x_i)=z_i \pm 1$ if $0 \leq i \leq m-1$, and}
\item{$\tau(x_i)=z_i \pm 2$ if $m \leq i \leq m+n-1$.} 
\end{itemize}
\end{definition}

In other words, if $O'$ is $\mathcal{P}_{m,n}$-related to $O$, $m$ pitch-classes in $O$ move by a semi-tone, while $n$ pitch-classes move by a whole tone, the rest of the pitch classes being identical. Note that $\mathcal{P}_{m,n}$ is a symmetric relation. From this definition, Douthett defines a \textit{parsimonious graph for a $\mathcal{P}_{m,n}$ relation} on a set $H$ of pc-sets as the graph whose set of vertices is $H$ and whose set of edges is the set $\{(O,O') \mid O \in H, O' \in H, O \mathcal{P}_{m,n} O'\}$.
A notable example is the ``Cube Dance'' graph, which is the parsimonious graph for the $\mathcal{P}_{1,0}$ relation (i.e. the voice-leading relation between chords resulting from the ascending or descending movement of a single pitch class by a semitone) on the set $H$ of 28 elements containing the 24 major and minor triads as well as the four augmented triads. This graph is represented on Figure \ref{fig:CubeDance}. One should note that this graph contains the subgraphs defined on the set $H$ by the neo-Riemannian operations $L$ and $P$ viewed as relations. Indeed, given $O$ and $O'$ in $H$ such that $O'=L(O)$ or $O'=P(O)$, one can immediately verify by definition of these neo-Riemannian operations that we have $O \mathcal{P}_{1,0} O'$. This subgraph is called ``HexaCycles'' by Douthett. The Cube Dance adds to ``HexaCycles'' the possible relations between the augmented triads and the hexatonic cycles, which, as commented by Douthett, ``\textit{serve as the couplings between hexatonic cycles and function nicely as a way of modulating between hexatonic sets}''.

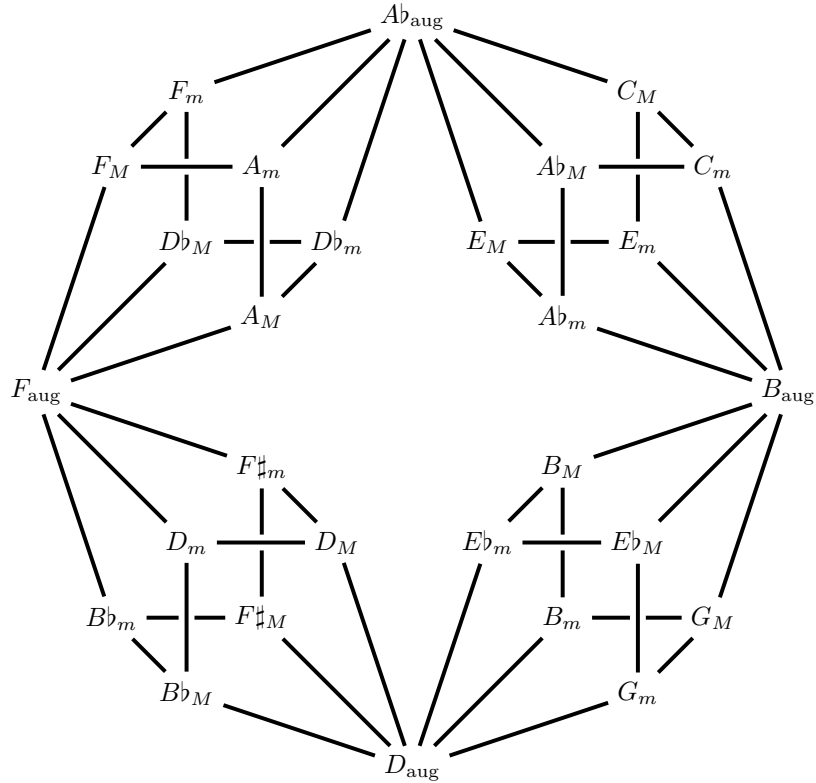
\begin{figure}
\begin{center}
\begin{tikzpicture}[scale=1.0]

	\node (CM) at (3,4) {$C_M$};
	\node (Cm) at (4,3) {$C_m$};
	\node (Em) at (3,2) {$E_m$};
	\node (EM) at (1,2) {$E_M$};
	\node (Abm) at (2,1) {$A\flat_m$};
	\node (AbM) at (2,3) {$A\flat_M$};
	
%% Respect the order of drawing edges, otherwise the 3D effect is lost !!
	\draw[-,>=latex, line width=1.5] (EM) to (Em) ;
	\draw[-,>=latex, line width=1.5] (Em) to (CM) ;
	\draw[-,>=latex, line width=1.5] (CM) to (Cm) ;
		\draw[-,>=latex, line width=6, color=white] (Cm) to (AbM) ;
	\draw[-,>=latex, line width=1.5] (Cm) to (AbM) ;
		\draw[-,>=latex, line width=6, white] (AbM) to (Abm) ;
	\draw[-,>=latex, line width=1.5] (AbM) to (Abm) ;
	\draw[-,>=latex, line width=1.5] (Abm) to (EM) ;

	\node (Fm) at (-3,4) {$F_m$};
	\node (FM) at (-4,3) {$F_M$};
	\node (DbM) at (-3,2) {$D\flat_M$};
	\node (Dbm) at (-1,2) {$D\flat_m$};
	\node (AM) at (-2,1) {$A_M$};
	\node (Am) at (-2,3) {$A_m$};
	\draw[-,>=latex, line width=1.5] (Dbm) to (DbM) ;
	\draw[-,>=latex, line width=1.5] (DbM) to (Fm) ;
	\draw[-,>=latex, line width=1.5] (Fm) to (FM) ;
		\draw[-,>=latex, line width=6, white] (FM) to (Am) ;
	\draw[-,>=latex, line width=1.5] (FM) to (Am) ;
		\draw[-,>=latex, line width=6, white] (Am) to (AM) ;
	\draw[-,>=latex, line width=1.5] (Am) to (AM) ;
	\draw[-,>=latex, line width=1.5] (AM) to (Dbm) ;

	\node (BbM) at (-3,-4) {$B\flat_M$};
	\node (Bbm) at (-4,-3) {$B\flat_m$};
	\node (Dm) at (-3,-2) {$D_m$};
	\node (DM) at (-1,-2) {$D_M$};
	\node (Fsm) at (-2,-1) {$F\sharp_m$};
	\node (FsM) at (-2,-3) {$F\sharp_M$};
	\draw[-,>=latex, line width=1.5] (Fsm) to (FsM) ;
	\draw[-,>=latex, line width=1.5] (FsM) to (Bbm) ;
	\draw[-,>=latex, line width=1.5] (Bbm) to (BbM) ;
		\draw[-,>=latex, line width=6, white] (BbM) to (Dm) ;
	\draw[-,>=latex, line width=1.5] (BbM) to (Dm) ;
		\draw[-,>=latex, line width=6, white] (Dm) to (DM) ;
	\draw[-,>=latex, line width=1.5] (Dm) to (DM) ;
	\draw[-,>=latex, line width=1.5] (DM) to (Fsm) ;

	\node (Gm) at (3,-4) {$G_m$};
	\node (GM) at (4,-3) {$G_M$};
	\node (EbM) at (3,-2) {$E\flat_M$};
	\node (Ebm) at (1,-2) {$E\flat_m$};
	\node (BM) at (2,-1) {$B_M$};
	\node (Bm) at (2,-3) {$B_m$};
	\draw[-,>=latex, line width=1.5] (GM) to (Bm) ;
	\draw[-,>=latex, line width=1.5] (Bm) to (BM) ;
	\draw[-,>=latex, line width=1.5] (BM) to (Ebm) ;
		\draw[-,>=latex, line width=6, white] (Ebm) to (EbM) ;
	\draw[-,>=latex, line width=1.5] (Ebm) to (EbM) ;
		\draw[-,>=latex, line width=6, white] (EbM) to (Gm) ;
	\draw[-,>=latex, line width=1.5] (EbM) to (Gm) ;
	\draw[-,>=latex, line width=1.5] (Gm) to (GM) ;

	\node (Faug) at (-5,0) {$F_{\text{aug}}$};
	\node (Baug) at (5,0) {$B_{\text{aug}}$};
	\node (Abaug) at (0,5) {$A\flat_{\text{aug}}$};
	\node (Daug) at (0,-5) {$D_{\text{aug}}$};
	
	\draw[-,>=latex, line width=1.5] (Fsm) to (Faug) ;
	\draw[-,>=latex, line width=1.5] (Dm) to (Faug) ;
	\draw[-,>=latex, line width=1.5] (Bbm) to (Faug) ;
	\draw[-,>=latex, line width=1.5] (FM) to (Faug) ;
	\draw[-,>=latex, line width=1.5] (DbM) to (Faug) ;
	\draw[-,>=latex, line width=1.5] (AM) to (Faug) ;
	
	\draw[-,>=latex, line width=1.5] (Fm) to (Abaug) ;
	\draw[-,>=latex, line width=1.5] (Am) to (Abaug) ;
	\draw[-,>=latex, line width=1.5] (Dbm) to (Abaug) ;
	\draw[-,>=latex, line width=1.5] (CM) to (Abaug) ;
	\draw[-,>=latex, line width=1.5] (AbM) to (Abaug) ;
	\draw[-,>=latex, line width=1.5] (EM) to (Abaug) ;

	\draw[-,>=latex, line width=1.5] (BM) to (Baug) ;
	\draw[-,>=latex, line width=1.5] (EbM) to (Baug) ;
	\draw[-,>=latex, line width=1.5] (GM) to (Baug) ;
	\draw[-,>=latex, line width=1.5] (Cm) to (Baug) ;
	\draw[-,>=latex, line width=1.5] (Em) to (Baug) ;
	\draw[-,>=latex, line width=1.5] (Abm) to (Baug) ;

	\draw[-,>=latex, line width=1.5] (DM) to (Daug) ;
	\draw[-,>=latex, line width=1.5] (FsM) to (Daug) ;
	\draw[-,>=latex, line width=1.5] (BbM) to (Daug) ;
	\draw[-,>=latex, line width=1.5] (Ebm) to (Daug) ;
	\draw[-,>=latex, line width=1.5] (Bm) to (Daug) ;
	\draw[-,>=latex, line width=1.5] (Gm) to (Daug) ;
	
\end{tikzpicture}
\end{center}
\caption{Douthett's ``Cube Dance'' graph as the parsimonious graph for the $\mathcal{P}_{1,0}$ relation on the set of the 24 major and minor triads and the four augmented triads.}
\label{fig:CubeDance}
\end{figure}

Douthett also introduced the ``Weitzmann's Waltz'' graph, which corresponds to the parsimonious graph for the $\mathcal{P}_{2,0}$ relation (i.e. the voice-leading relation between chords resulting from the ascending or descending movement of two pitch classes by a semitone) on the set $H$ of 28 elements containing the 24 major and minor triads as well as the four augmented triads. This graph is represented on Figure \ref{fig:WeitzmannWaltz}.

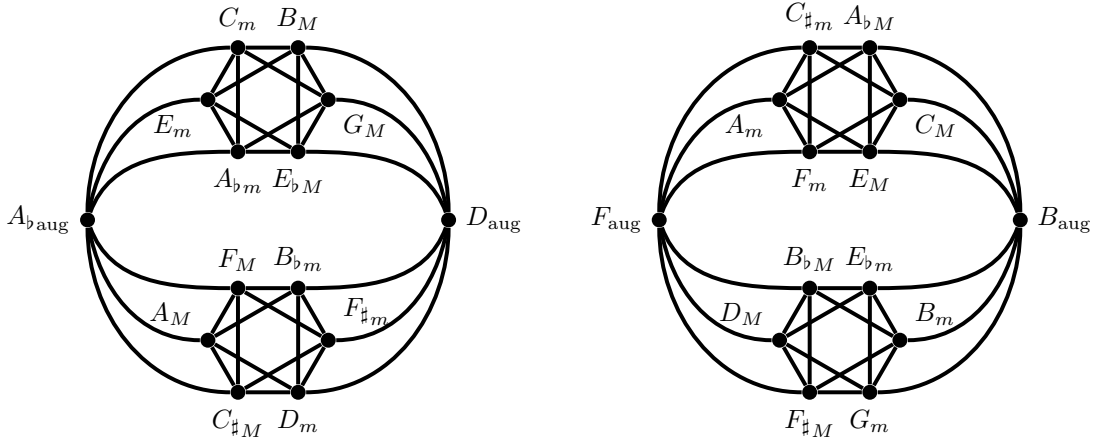
\begin{figure}
\begin{center}
\begin{tikzpicture}[scale=0.8]
	\node (GM) at (-5+1,0) [circle,fill=black,inner sep=2pt,label=below right:$G_M$]{};
	\node (BM) at (-5+0.5,0.866) [circle,fill=black,inner sep=2pt,label=above:$B_M$]{};
	\node (Cm) at (-5-0.5,0.866) [circle,fill=black,inner sep=2pt,label=above:$C_m$]{};
	\node (Em) at (-5-1,0) [circle,fill=black,inner sep=2pt,label=below left:$E_m$]{};
	\node (Abm) at (-5-0.5,-0.866) [circle,fill=black,inner sep=2pt,label=below:${A_\flat}_m$]{};
	\node (EbM) at (-5+0.5,-0.866) [circle,fill=black,inner sep=2pt,label=below:${E_\flat}_M$]{};

	\node (Fsm) at (-5+1,0-4) [circle,fill=black,inner sep=2pt,label=above right:${F_\sharp}_m$]{};
	\node (Bbm) at (-5+0.5,0.866-4) [circle,fill=black,inner sep=2pt,label=above:${B_\flat}_m$]{};
	\node (FM) at (-5-0.5,0.866-4) [circle,fill=black,inner sep=2pt,label=above:$F_M$]{};
	\node (AM) at (-5-1,0-4) [circle,fill=black,inner sep=2pt,label=above left:$A_M$]{};
	\node (CsM) at (-5-0.5,-0.866-4) [circle,fill=black,inner sep=2pt,label=below:${C_\sharp}_M$]{};
	\node (Dm) at (-5+0.5,-0.866-4) [circle,fill=black,inner sep=2pt,label=below:$D_m$]{};
	
	\draw[-,>=latex, line width=1.5] (GM) to (BM) ;
	\draw[-,>=latex, line width=1.5] (GM) to (EbM) ;
	\draw[-,>=latex, line width=1.5] (GM) to (Cm) ;
	\draw[-,>=latex, line width=1.5] (GM) to (Abm) ;
	\draw[-,>=latex, line width=1.5] (BM) to (Cm) ;
	\draw[-,>=latex, line width=1.5] (EbM) to (Abm) ;
	\draw[-,>=latex, line width=1.5] (Em) to (Cm) ;
	\draw[-,>=latex, line width=1.5] (Em) to (Abm) ;
	\draw[-,>=latex, line width=1.5] (Em) to (EbM) ;
	\draw[-,>=latex, line width=1.5] (Em) to (BM) ;
	\draw[-,>=latex, line width=1.5] (BM) to (EbM) ;
	\draw[-,>=latex, line width=1.5] (Cm) to (Abm) ;

	\draw[-,>=latex, line width=1.5] (FM) to (Bbm) ;
	\draw[-,>=latex, line width=1.5] (FM) to (AM) ;
	\draw[-,>=latex, line width=1.5] (AM) to (CsM) ;
	\draw[-,>=latex, line width=1.5] (CsM) to (Dm) ;
	\draw[-,>=latex, line width=1.5] (Dm) to (Fsm) ;
	\draw[-,>=latex, line width=1.5] (Fsm) to (Bbm) ;
	\draw[-,>=latex, line width=1.5] (Bbm) to (Dm) ;
	\draw[-,>=latex, line width=1.5] (FM) to (CsM) ;
	\draw[-,>=latex, line width=1.5] (Bbm) to (AM) ;
	\draw[-,>=latex, line width=1.5] (AM) to (Dm) ;
	\draw[-,>=latex, line width=1.5] (Fsm) to (CsM) ;
	\draw[-,>=latex, line width=1.5] (Fsm) to (FM) ;

	\node (Daug) at (-5+3,-2) [circle,fill=black,inner sep=2pt,label=right:$D_{\text{aug}}$]{};
	\node (Abaug) at (-5-3,-2) [circle,fill=black,inner sep=2pt,label=left:${A_\flat}_{\text{aug}}$]{};
	
	\draw[-,>=latex, line width=1.5] (GM) to[out=0, in=100] (Daug) ;
	\draw[-,>=latex, line width=1.5] (BM) to[out=0, in=90] (Daug) ;
	\draw[-,>=latex, line width=1.5] (EbM) to[out=0, in=110] (Daug) ;
	\draw[-,>=latex, line width=1.5] (Cm) to[out=180, in=90] (Abaug) ;
	\draw[-,>=latex, line width=1.5] (Em) to[out=180, in=80] (Abaug) ;
	\draw[-,>=latex, line width=1.5] (Abm) to[out=180, in=70] (Abaug) ;

	\draw[-,>=latex, line width=1.5] (Bbm) to[out=0, in=250] (Daug) ;
	\draw[-,>=latex, line width=1.5] (Fsm) to[out=0, in=260] (Daug) ;
	\draw[-,>=latex, line width=1.5] (Dm) to[out=0, in=270] (Daug) ;
	\draw[-,>=latex, line width=1.5] (FM) to[out=180, in=290] (Abaug) ;
	\draw[-,>=latex, line width=1.5] (AM) to[out=180, in=280] (Abaug) ;
	\draw[-,>=latex, line width=1.5] (CsM) to[out=180, in=270] (Abaug) ;

%% Second part of the graph

	\node (CM) at (4.5+1,0) [circle,fill=black,inner sep=2pt,label=below right:$C_M$]{};
	\node (AbM) at (4.5+0.5,0.866) [circle,fill=black,inner sep=2pt,label=above:${A_\flat}_M$]{};
	\node (Csm) at (4.5-0.5,0.866) [circle,fill=black,inner sep=2pt,label=above:${C_\sharp}_m$]{};
	\node (Am) at (4.5-1,0) [circle,fill=black,inner sep=2pt,label=below left:$A_m$]{};
	\node (Fm) at (4.5-0.5,-0.866) [circle,fill=black,inner sep=2pt,label=below:$F_m$]{};
	\node (EM) at (4.5+0.5,-0.866) [circle,fill=black,inner sep=2pt,label=below:$E_M$]{};

	\node (Bm) at (4.5+1,0-4) [circle,fill=black,inner sep=2pt,label=above right:$B_m$]{};
	\node (Ebm) at (4.5+0.5,0.866-4) [circle,fill=black,inner sep=2pt,label=above:${E_\flat}_m$]{};
	\node (BbM) at (4.5-0.5,0.866-4) [circle,fill=black,inner sep=2pt,label=above:${B_\flat}_M$]{};
	\node (DM) at (4.5-1,0-4) [circle,fill=black,inner sep=2pt,label=above left:$D_M$]{};
	\node (FsM) at (4.5-0.5,-0.866-4) [circle,fill=black,inner sep=2pt,label=below:${F_\sharp}_M$]{};
	\node (Gm) at (4.5+0.5,-0.866-4) [circle,fill=black,inner sep=2pt,label=below:$G_m$]{};
	
	\draw[-,>=latex, line width=1.5] (CM) to (AbM) ;
	\draw[-,>=latex, line width=1.5] (AbM) to (Csm) ;
	\draw[-,>=latex, line width=1.5] (Csm) to (Am) ;
	\draw[-,>=latex, line width=1.5] (Am) to (Fm) ;
	\draw[-,>=latex, line width=1.5] (Fm) to (EM) ;
	\draw[-,>=latex, line width=1.5] (EM) to (CM) ;
	\draw[-,>=latex, line width=1.5] (CM) to (Csm) ;
	\draw[-,>=latex, line width=1.5] (Csm) to (Fm) ;
	\draw[-,>=latex, line width=1.5] (Fm) to (CM) ;
	\draw[-,>=latex, line width=1.5] (AbM) to (Am) ;
	\draw[-,>=latex, line width=1.5] (Am) to (EM) ;
	\draw[-,>=latex, line width=1.5] (EM) to (AbM) ;

	\draw[-,>=latex, line width=1.5] (Bm) to (Ebm) ;
	\draw[-,>=latex, line width=1.5] (Ebm) to (BbM) ;
	\draw[-,>=latex, line width=1.5] (BbM) to (DM) ;
	\draw[-,>=latex, line width=1.5] (DM) to (FsM) ;
	\draw[-,>=latex, line width=1.5] (FsM) to (Gm) ;
	\draw[-,>=latex, line width=1.5] (Gm) to (Bm) ;
	\draw[-,>=latex, line width=1.5] (Bm) to (BbM) ;
	\draw[-,>=latex, line width=1.5] (BbM) to (FsM) ;
	\draw[-,>=latex, line width=1.5] (FsM) to (Bm) ;
	\draw[-,>=latex, line width=1.5] (Ebm) to (DM) ;
	\draw[-,>=latex, line width=1.5] (DM) to (Gm) ;
	\draw[-,>=latex, line width=1.5] (Gm) to (Ebm) ;

	\node (Baug) at (4.5+3,-2) [circle,fill=black,inner sep=2pt,label=right:$B_{\text{aug}}$]{};
	\node (Faug) at (4.5-3,-2) [circle,fill=black,inner sep=2pt,label=left:$F_{\text{aug}}$]{};
	
	\draw[-,>=latex, line width=1.5] (AbM) to[out=0, in=90] (Baug) ;
	\draw[-,>=latex, line width=1.5] (CM) to[out=0, in=100] (Baug) ;
	\draw[-,>=latex, line width=1.5] (EM) to[out=0, in=110] (Baug) ;
	\draw[-,>=latex, line width=1.5] (Csm) to[out=180, in=90] (Faug) ;
	\draw[-,>=latex, line width=1.5] (Am) to[out=180, in=80] (Faug) ;
	\draw[-,>=latex, line width=1.5] (Fm) to[out=180, in=70] (Faug) ;

	\draw[-,>=latex, line width=1.5] (Ebm) to[out=0, in=250] (Baug) ;
	\draw[-,>=latex, line width=1.5] (Bm) to[out=0, in=260] (Baug) ;
	\draw[-,>=latex, line width=1.5] (Gm) to[out=0, in=270] (Baug) ;
	\draw[-,>=latex, line width=1.5] (BbM) to[out=180, in=290] (Faug) ;
	\draw[-,>=latex, line width=1.5] (DM) to[out=180, in=280] (Faug) ;
	\draw[-,>=latex, line width=1.5] (FsM) to[out=180, in=270] (Faug) ;

\end{tikzpicture}
\end{center}
\caption{Douthett's ``Weitzmann's Waltz'' graph as the parsimonious graph for the $\mathcal{P}_{2,0}$ relation on the set of the 24 major and minor triads and the four augmented triads.}
\label{fig:WeitzmannWaltz}
\end{figure}

%A more formal description of the bridge between relations and graphs can be given in the language of category theory. The description of the general case will be introduced in Section 2.1.

Whereas relations can be described as graphs, which can then be used for musical applications, \textit{transformational} analysis using relations is however trickier to define than in the case of groups and group actions. In the framework of Lewin, the image of a given $x$ by the group action of an element $g$ of a group $G$ is determined unambiguously. Hence, one can speak about ``applying the operation $g$ to the musical element $x$'', or speaking about ``the image of the musical element $x$ by the operation $g$''. Assume instead that, given a relation $\mathcal{R}$ between two sets $X$ and $Y$ and an element $x$ of $X$, there exists multiple $y$ in $Y$ such that we have $x \mathcal{R} y$. How can then one define ``the image of the musical element $x$ under the relation $\mathcal{R}$'' ? This question motivates the use of the more general framework of \textit{relational PK-Nets}, which we define in the next section.

\section{Defining Relational PK-Nets}

Before introducing the definition of relational PK-Nets, we recall basic facts about relations and the associated category $\mathbf{Rel}$.

\subsection{The 2-category $\mathbf{Rel}$}

We first recall some basic definitions about relations.

\begin{definition}
Let $X$ and $Y$ be two sets. A binary relation $\mathcal{R}$ between $X$ and $Y$ is a subset of the cartesian product $X \times Y$. We say that $y \in Y$ is related to $x \in X$ by $\mathcal{R}$, which is notated as $x \mathcal{R} y$, if $(x,y) \in \mathcal{R}$.
\end{definition}

\begin{definition}
Let $\mathcal{R}$ be a relation between two sets $X$ and $Y$. We say that $\mathcal{R}$ is left-total if, for all $x$ in $X$, there exists at least one $y$ in $Y$ such that we have $x \mathcal{R} y$.
\end{definition}

\begin{definition}
Let $X$ and $Y$ be two sets, and $\mathcal{R}$ and $\mathcal{R'}$ be two relations between them. The relation $\mathcal{R}$ is said to be included in $\mathcal{R'}$ if $x \mathcal{R} y$ implies $x \mathcal{R'} y$, for all pairs $(x,y) \in X \times Y$.
\end{definition}

Relations can be composed according to the definition below.

\begin{definition}
Let $X$, $Y$, and $Z$ be sets, $\mathcal{R}$ be a relation between $X$ and $Y$, and $\mathcal{R'}$ be a relation between $Y$ and $Z$. The composition of $\mathcal{R'}$ and $\mathcal{R}$ is the relation $\mathcal{R''} = \mathcal{R'} \circ \mathcal{R}$ defined by the pairs $(x,z)$ with $x \in X$ and $z \in Z$ such that there exists at least one $y \in Y$ such that $x \mathcal{R} y$, and $y \mathcal{R'} z$.
\end{definition}

By the definition below, sets and binary relations between them form a 2-category.

\begin{definition}
The 2-category $\mathbf{Rel}$ is the category which has
\begin{itemize}
\setlength\itemsep{0.05em}
\item{sets as objects,}
\item{relations as 1-morphisms between them, and}
\item{inclusion as 2-morphisms between relations.}
\end{itemize}
\end{definition}

Notice that the definition of relations includes the particular case of functions between sets: if $\mathcal{R}$ is a relation between two sets $X$ and $Y$, then $\mathcal{R}$ is a function if, given any element $x$ in $X$, there exists exactly one $y$ in $Y$ such that we have $x \mathcal{R} y$. As a consequence, the category $\mathbf{Sets}$ of sets and functions between them is a subcategory of $\mathbf{Rel}$. Notice also that the definition of relations also includes the case of partial functions between sets: if $\mathcal{R}$ is a relation between two sets $X$ and $Y$, it may be possible that, given an element $x$ in $X$, there exists no $y$ in $Y$ such that we have $x \mathcal{R} y$. Hence, the category $\mathbf{Par}$ of sets and partial functions between them is also a subcategory of $\mathbf{Rel}$.

Since $\mathbf{Rel}$ is a 2-category, the usual notion of a functor to $\mathbf{Rel}$ may be replaced by the notion of a \textit{lax functor} to $\mathbf{Rel}$ in order to take into account the 2-morphisms between relations. This notion is defined more precisely below.

\begin{definition}
Let $\mathcal{C}$ be a 1-category. A lax functor $F$ from $\mathcal{C}$ to $\mathbf{Rel}$ is the data of a map
\begin{itemize}
\setlength\itemsep{0.05em}
\item{which sends each object $X$ of $\mathbf{C}$ to an object $F(X)$ of $\mathbf{Rel}$, and}
\item{which sends each morphism $f \colon X \to Y$ of $\mathbf{C}$ to a relation $F(f) \colon F(X) \to F(Y)$ of $\mathbf{Rel}$, such that for each pair $(f,g)$ of composable morphisms $f \colon X \to Y$ and $g \colon Y \to Z$ the image relation $F(g)F(f)$ is included in $F(gf)$.}
\end{itemize}
\end{definition}

A lax functor will be called a 1-functor (coinciding with the usual notion of functor between 1-categories) when $F(g)F(f)=F(gf)$.

Given two lax functors $F$ and $G$ to $\mathbf{Rel}$, the usual notion of a natural transformation $\eta$ between $F$ and $G$ has to be replaced by that of a \textit{lax natural transformation}, which we define below.

\begin{definition}
Let $\mathbf{C}$ be a 1-category, and let $F$ and $G$ be two lax functors from $\mathbf{C}$ to $\mathbf{Rel}$. A lax natural transformation $\eta$ between $F$ and $G$ is the data of a collection of relations $\{\eta_X \colon F(X) \to G(X)\}$ for all objects $X$ of $\mathbf{C}$, such that, for any morphism $f \colon X \to Y$, the relation $\eta_Y F(f)$ is included in the relation $G(f) \eta_X$.
\end{definition}

Finally, there exists a notion of inclusion of lax natural transformations, which we define precisely below.

\begin{definition}
Let $\mathbf{C}$ be a 1-category, let $F$ and $G$ be two lax functors from $\mathbf{C}$ to $\mathbf{Rel}$, and let $\eta$ and $\eta'$ be two lax natural transformation between $F$ and $G$. We say that $\eta$ is included in $\eta'$ if, for any object $X$ of $\mathbf{C}$, the component $\eta_X$ is included in the component $\eta'_X$.
\end{definition}

\subsection{Relational PK-Nets}

With the previous definitions in mind, we now give the formal definition of a relational PK-Net.

\begin{definition}
Let $\mathbf{C}$ be a small 1-category, and $S$ a lax functor from $\mathbf{C}$ to the category $\mathbf{Rel}$. Let $\Delta$ be a small 1-category and $R$ a lax functor from $\Delta$ to $\mathbf{Rel}$ with non-void values. A relational PK-net of form $R$ and of support $S$ is a 4-tuple $(R,S,F,\phi)$, in which 
\begin{itemize}
\setlength\itemsep{0.05em}
\item{$F$ is a functor from $\Delta$ to $\mathbf{C}$,}
\item{and $\phi$ is a lax natural transformation from $R$ to $SF$, such that, for any object $X$ of $\Delta$, the component $\phi_X$ is left-total.}
\end{itemize}
\end{definition}

The definition given above is almost similar to that of a PK-Net in $\mathbf{Sets}$, but the specificities of the 2-category $\mathbf{Rel}$ impose slight adjustments. The first one is the requirement that the functor $R$ shall be a lax functor to $\mathbf{Rel}$ instead of a 1-functor. To see why this is the case, let $X$, $Y$, and $Z$ be objects of $\Delta$, and let $f \colon X \to Y$, $g \colon Y \to Z$, and $h \colon X \to Z$ be morphisms between them, with $h=gf$. Relations are more general than functions: given the relation $R(f)$ between the sets $R(X)$ and $R(Y)$, it is possible that, for a given element $x$ in $R(X)$ there exists multiple elements $y$ in $R(Y)$ such that we have $x R(f) y$, or even none at all. To be consistent, we require that, given $x$ in $R(X)$ and $z$ in $R(Z)$, we have that $x R(g) \circ R(f) z$ implies $x R(h) z$. However, we do not require the strict equality of relations, as it gives more flexibility over the possible relations with the elements of $R(Y)$. The first example below will clarify this notion in the case of the analysis of sets of varying cardinalities, in particular for triads and seventh chords. Note that the same logic requires that $S$ shall be a lax functor to $\mathbf{Rel}$ as well.

The second adjustment corresponds to the requirement that $\phi$ shall be a lax natural transformation from $R$ to $SF$ instead of an ordinary one. Let us recall the role of the functor $S$ and the natural tranformation $\phi$ in the case of PK-Nets in $\mathbf{Sets}$. The lax functor $S$ defines the context of the analysis: for any objects $e$ and $e'$ of $\mathbf{C}$ and a morphism $f \colon e \to e'$ between them, the sets $S(e)$ and $S(e')$ represent all the possible musical entities of interest and the function $S(f)$ represents a transformation between them. Given a set of unnamed musical objects $R(X)$ (with $X$ being an object of the category $\Delta$), the component $\phi_X$ of the natural transformation $\phi$ is a function which ``names'' these objects by their images in $SF(X)$. In the case of relational PK-Nets, since $S(f)$ is a relation instead of a function, it is possible that an element of $S(e)$ may be related to more than one element of $S(e')$, or even none. However, the relations between the actual musical objects under study in the images of the lax functor $R$ may not cover the range of possibilities offered by the relations in the image of $SF$. Thus, we use a lax natural transformation $\phi$ instead of an ordinary one, such that the images by $\phi$ of the relations given through $R$ be included in those given through $SF$. Informally speaking, the lax natural transformation $\phi$ ``selects'' a restricted range of related elements among the possibilities given by the functor $SF$. In addition, we require that each component of $\phi$ be left-total, so that all musical objects are ``named'' in the images of the functor $SF$. All the constitutive elements of a relational PK-Net $(R,S,F\phi)$ are summed-up in the diagram of Figure \ref{fig:RelPKNet}. In addition, different examples in Section 3 of this paper will clarify this notion.

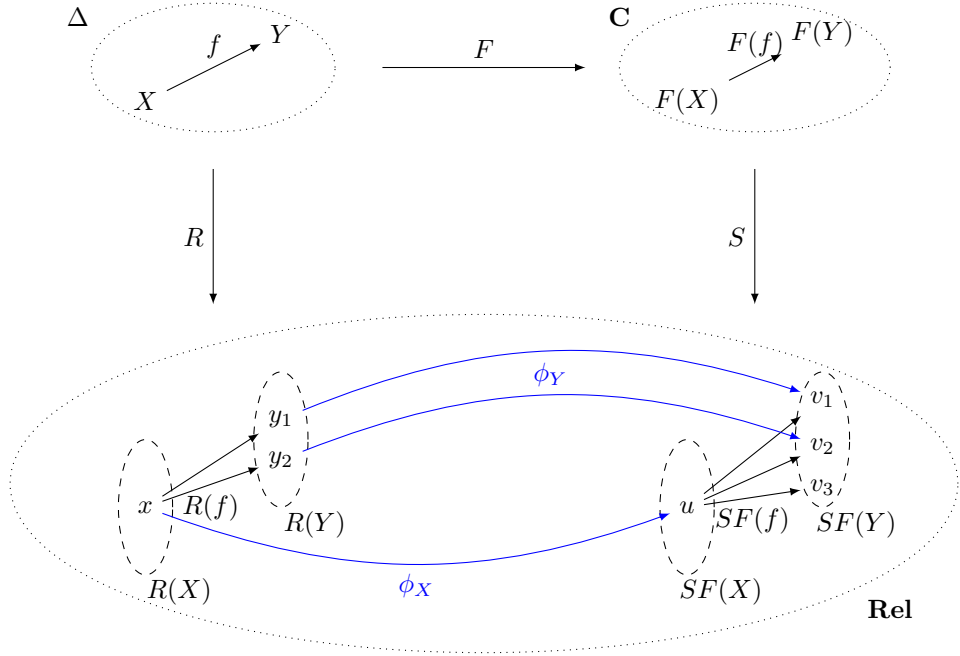
\begin{figure}
\begin{center}
\begin{tikzpicture}[scale=0.9]
	\node (X) at (0,0) {$X$};
	\node (Y) at (2,1) {$Y$};
	\draw [->,>=latex] (X) -- (Y) node[above,midway] {$f$};

	\draw [->,>=latex] (1,-1) -- (1,-3) node[left,midway] {$R$};

	\draw[dashed,rotate=0] (0,-6) ellipse (0.4 and 1);
	\draw[dashed,rotate=0] (2,-5) ellipse (0.4 and 1);
	\node (RX) at (0.5,-7.25) {$R(X)$};
	\node (RY) at (2.5,-6.25) {$R(Y)$};
	\node (x) at (0,-6) {$x$};
	\node (y1) at (2,-4.7) {$y_1$};
	\node (y2) at (2,-5.3) {$y_2$};
	\draw [->,>=latex] (x) -- (y1) node[above,midway] {};
	\draw [->,>=latex] (x) -- (y2) node[below,midway] {$R(f)$};

	\draw [->,>=latex] (3.5,0.5) -- (6.5,0.5) node[above,midway] {$F$};

	\node (FX) at (8,0) {$F(X)$};
	\node (FY) at (10,1) {$F(Y)$};
	\draw [->,>=latex] (FX) -- (FY) node[above,midway] {$F(f)$};

	\draw [->,>=latex] (9,-1) -- (9,-3) node[left,midway] {$S$};

	\draw[dashed,rotate=0] (8,-6) ellipse (0.4 and 1);
	\draw[dashed,rotate=0] (10,-5) ellipse (0.4 and 1);
	\node (SFX) at (8.5,-7.25) {$SF(X)$};
	\node (SFY) at (10.5,-6.25) {$SF(Y)$};
	\node (u) at (8,-6) {$u$};
	\node (v1) at (10,-4.4) {$v_1$};
	\node (v2) at (10,-5.1) {$v_2$};
	\node (v3) at (10,-5.7) {$v_3$};
	\draw [->,>=latex] (u) -- (v1) node[above,midway] {};
	\draw [->,>=latex] (u) -- (v2) node[below,midway] {};
	\draw [->,>=latex] (u) -- (v3) node[below,midway] {$SF(f)$};

	\draw [->,>=latex, color=blue] (x) to[bend right=20] node[below,midway] {$\phi_X$} (u) ;
	\draw [->,>=latex, color=blue] (y1) to[bend left=20] (v1) ;
	\draw [->,>=latex, color=blue] (y2) to[bend left=20] node[above,midway] {$\phi_Y$}(v2) ;

	\draw[dotted,rotate=0] (5,-5.65) ellipse (7 and 2.5);
	\node (Rel) at (11,-7.5) {$\mathbf{Rel}$};
	\draw[dotted,rotate=0] (1,0.5) ellipse (1.8 and 0.95);
	\node (Delta) at (-1,1.25) {$\Delta$};
	\draw[dotted,rotate=0] (9,0.5) ellipse (2 and 0.95);
	\node (C) at (7,1.25) {$\mathbf{C}$};
\end{tikzpicture}
\end{center}
\caption{Diagram showing the constitutive elements of a simple relational PK-Net $(R,S,F,\phi)$.}
\label{fig:RelPKNet}
\end{figure}

We now give a simple example to illustrate the advantages of relational PK-Nets, in the case of transformations between sets of varying cardinalities.

\begin{example}
Let $\mathbf{C}$ be the $T/I$ group, considered as a single-object category, and consider its natural action on the set $\mathbb{Z}_{12}$ of the twelve pitch classes (with the usual semi-tone encoding), which defines a functor $S \colon T/I \to \mathbf{Rel}$.
Let $\Delta$ be the category with three objects $X$, $Y$, and $Z$ and precisely three morphisms $f \colon X \to Y$, $g \colon Y \to Z$, and $h \colon X \to Z$, with $h=gf$. Consider the functor $F \colon \Delta \to T/I$ which sends $f$ to $I_3$, $g$ to $I_5$, and $h$ to $T_2$. 

Consider now a lax functor $R \colon \Delta \to \mathbf{Rel}$ such that we have
\begin{itemize}
\setlength\itemsep{0.05em}
\item{$R(X)=\{x_1,x_2,x_3,x_4\}$, $R(Y)=\{y_1,y_2,y_3\}$, and $R(Z)=\{z_1,z_2,z_3,z_4\}$, and}
\item{the relation $R(f)$ is such that $x_i R(f) y_i$, for $1 \leq i \leq 3$, the relation $R(g)$ is such that $y_i R(f) z_i$, for $1 \leq i \leq 3$, and the relation $R(h)$ is such that $x_i R(f) z_i$, for $1 \leq i \leq 4$.}
\end{itemize}
Consider the left-total lax natural transformation $\phi$ such that
\begin{itemize}
\setlength\itemsep{0.05em}
\item{$\phi_X(x_1)=C$, $\phi_X(x_2)=E$, $\phi_X(x_3)=G$, $\phi_X(x_4)=G$, and}
\item{$\phi_Y(y_1)=E_\flat$, $\phi_Y(y_2)=B$, $\phi_Y(y_3)=G_\sharp$, and}
\item{$\phi_Z(z_1)=D$, $\phi_Z(z_2)=F_\sharp$, $\phi_Z(z_3)=A$, $\phi_Z(z_4)=C$.}
\end{itemize}
Then $(R,S,F,\phi)$ is a relational PK-net of form $R$ and support $S$ which describes the $T_2$ transposition of the dominant seventh $C^7$ chord to the dominant seventh $D^7$ chord and the successive $I_3$ and $I_5$ inversions of its underlying $C$ major triad. 
\end{example}

This simple example is of particular interest as it shows the advantage of relational PK-Nets over the usual PK-Nets in $\mathbf{Sets}$ to describe transformations between sets of decreasing cardinalities. Here, the relation $R(f)$ is a partial function from $R(X)$ to $R(Y)$ which ``forgets'' the element $x_4$, allowing us to describe the transformation of the major triad on which the initial seventh chord is built. The requirement that $R$ be a lax functor appears clearly: the composite relation $R(g) \circ R(f)$ only relates the first three elements $x_1$, $x_2$, and $x_3$ of $R(X)$ to the first three elements $z_1$, $z_2$, and $z_3$ of $R(Z)$ (i.e. the underlying major triads), whereas the relation $R(h)=R(gf)$ relates all four elements, i.e. it describes the full transformation of the seventh chord by the $T_2$ transposition. We thus require that the relation $R(g) \circ R(f)$ be included in $R(h)$.

Relational PK-Nets of form $R$ can be transformed by the mean of PK-homographies, whose definition is as follows.

\begin{definition}
A PK-homography $(N, \nu) \colon K \to K'$ from a relational PK-Net $K= (R, S, F, \phi)$ to a second relational PK-Net $K' = (R, S', F', \phi')$ consists of a functor $N \colon \mathbf{C} \to \mathbf{C'}$ and a left-total lax natural transformation $\nu \colon SF \to S'F'$ such that $F' = NF$ and the composite lax natural transformation $\nu \circ \phi$ is included in $\phi'$. A PK-homography is called a PK-isography if $N$ is an isomorphism and $\nu$ is an equivalence.
\end{definition}

There exists a notion of inclusion of PK-homographies, defined as follows.

\begin{definition}
Let $(N, \nu) \colon K \to K'$ and $(N', \nu') \colon K \to K'$ be PK-homographies from a relational PK-Net $K= (R, S, F, \phi)$ to a second relational PK-Net $K' = (R, S', F', \phi')$. We say that $(N, \nu)$ is included in $(N', \nu')$ if $\nu$ is included in $\nu'$.
\end{definition}

For a given lax functor $R \colon \Delta \to \mathbf{Rel}$, relational PK-Nets with fixed functor $R$ form a 2-category $\mathbf{RelPKN_R}$, which is defined as follows.

\begin{definition}
For a given lax functor $R \colon \Delta \to \mathbf{Rel}$, the 2-category $\mathbf{RelPKN_R}$ has the relational PK-Nets of form $R$ as objects, PK-homographies $(N,\nu)$ between them as 1-morphisms, and inclusion of PK-homographies as 2-morphisms.
\end{definition}

\section{Relational PK-Nets in monoids of parsimonious relations}

In this section, we revisit the work of Douthett about parsimonious relations, and show how we can define proper relational PK-Nets for transformational music analysis on major, minor, and augmented triads. We begin by defining a new monoid of parsimonious relations originating from the Cube Dance.

\subsection{The $M_{\mathcal{UPL}}$ monoid}

As discussed in Section 1.3, the Cube Dance graph presented in Figure \ref{fig:CubeDance} results from the HexaCycles graph to which the vertices and edges corresponding to the four augmented triads and their $\mathcal{P}_{1,0}$ relation to major and minor triads have been added. The HexaCycles graph itself results from the neo-riemannian operations $L$ and $P$ viewed as relations on the set of the 24 major and minor triads. We formalize the construction of the CubeDance graph by defining three different relations on the 28-elements set of the 24 major and minor triads and the four augmented triads. We adopt here the usual semi-tone encoding of pitch-classes with $C=0$, and we notate a major chord by $n_M$ and a minor chord by $n_m$, where $n$ is the root pitch class of the chord, with $0 \leq n \leq 11$. For the four augmented triads, we adopt the following notation: ${A_\flat}_\text{aug}=0_\text{aug}$, ${F}_\text{aug}=1_\text{aug}$, ${D}_\text{aug}=2_\text{aug}$, and ${B}_\text{aug}=3_\text{aug}$. All arithmetic operations are understood modulo 12, unless otherwise indicated.

\begin{definition}
Let $H$ be the set of the 24 major and minor triads and the four augmented triads, i.e. $H=\{n_M, 0 \leq n \leq 11\} \cup \{n_m, 0 \leq n \leq 11\} \cup \{n_\text{aug}, 0 \leq n \leq 3\}$. We define the following relations over $H$.
\begin{itemize}
\setlength\itemsep{0.05em}
\item{The relation $\mathcal{P}$ is the symmetric relation such that we have $n_M \mathcal{P} n_m$ for $0 \leq n \leq 11$, and $n_\text{aug} \mathcal{P} n_\text{aug}$ for $0 \leq n \leq 3$. This is the relational analogue of the neo-Riemannian $P$ operation (not to be confused with the $\mathcal{P}_{m,n}$ relations).}
\item{The relation $\mathcal{L}$ is the symmetric relation such that we have $n_M \mathcal{L} (n+4)_m$ for $0 \leq n \leq 11$, and $n_\text{aug} \mathcal{L} n_\text{aug}$ for $0 \leq n \leq 3$. This is the relational analogue of the neo-Riemannian $L$ operation.}
\item{The relation $\mathcal{U}$ is the symmetric relation such that we have $n_M \mathcal{U} (n \pmod 4)_\text{aug}$ for $0 \leq n \leq 11$, and $n_m \mathcal{U} ((n+3) \pmod 4)_\text{aug}$ for $0 \leq n \leq 11$.}
\end{itemize}
\end{definition}

We are now interested in the monoid $M_{\mathcal{UPL}}$ generated by the relations $\mathcal{U}$, $\mathcal{P}$, and $\mathcal{L}$ under the composition of relations introduced in Section 2.1. The structure of this monoid can be determined by hand through an exhaustive enumeration, or more simply with any computational algebra software, such as GAP \cite{GAP4}.

\begin{proposition}
The monoid $M_{\mathcal{UPL}}$ generated by the relations $\mathcal{U}$, $\mathcal{P}$, and $\mathcal{L}$ has for presentation
$$\begin{aligned} M_{\mathcal{UPL}} =  \langle \mathcal{U}, \mathcal{P}, \mathcal{L} \mid {} & \mathcal{P}^2=\mathcal{L}^2=e, \hspace{0.2cm} \mathcal{LPL}=\mathcal{PLP}, \hspace{0.2cm} \mathcal{U}^3=\mathcal{U}, \\
					&  \mathcal{UP}=\mathcal{UL}, \hspace{0.2cm} \mathcal{PU}=\mathcal{LU}, \hspace{0.2cm} \mathcal{U}^2\mathcal{PU}^2=\mathcal{P}\mathcal{U}^2\mathcal{PU}^2\mathcal{P}, \\
					&  (\mathcal{UP})^2\mathcal{U}^2 = \mathcal{P} (\mathcal{UP})^2\mathcal{U}^2 \mathcal{P}, \hspace{0.2cm}�\mathcal{U}^2(\mathcal{PU})^2 = \mathcal{P} \mathcal{U}^2(\mathcal{PU})^2 \mathcal{P}�\rangle \end{aligned} $$
and contains 40 elements.
\end{proposition}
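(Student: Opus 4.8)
The plan is to verify this proposition by a direct but organized computation, structured in three stages: first establish that the listed relations hold in $M_{\mathcal{UPL}}$, then use them to show that every word in $\mathcal{U},\mathcal{P},\mathcal{L}$ reduces to one of at most 40 normal forms, and finally exhibit 40 distinct relations on $H$ realized by those normal forms, so that no further collapse occurs. Throughout I would work with the concrete presentations of the relations as $28\times 28$ Boolean matrices (equivalently, as subsets of $H \times H$), since composition of relations is then just Boolean matrix multiplication, and equality of relations is entrywise equality — this is exactly what makes the exhaustive enumeration feasible by hand or, as the statement notes, instantaneous in GAP.

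For the first stage, I would check each defining relation by Boolean matrix computation. The relations $\mathcal{P}^2 = \mathcal{L}^2 = e$ and $\mathcal{LPL} = \mathcal{PLP}$ are essentially the relations of $S_3$ (the neo-Riemannian dihedral-type braid relation) restricted to the $24$ major/minor triads, together with the fact that both $\mathcal{P}$ and $\mathcal{L}$ act as the identity on the four augmented triads; so these follow from the known behaviour of $P$ and $L$ as permutations on triads. The relation $\mathcal{U}^3 = \mathcal{U}$ encodes that $\mathcal{U}$, composed with itself, stabilizes on the "reachable from an augmented triad" structure — concretely $\mathcal{U}^2$ is an idempotent-like relation built from the hexatonic groupings, and composing once more returns $\mathcal{U}$. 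The mixed relations $\mathcal{UP} = \mathcal{UL}$ and $\mathcal{PU} = \mathcal{LU}$ reflect the fact that $\mathcal{U}$ cannot distinguish a major triad from its $P$- or $L$-partner at the level of which augmented triad they connect to (this is visible in the Cube Dance: the three major/minor triads around a given augmented triad form exactly a $P/L$-orbit fragment). The three remaining, longer relations are the "surprises" that cut the monoid down to finite size; I would simply verify them as matrix identities.

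The second stage is the combinatorial core and, I expect, the main obstacle: showing the presentation has \emph{at most} $40$ elements, i.e. that the rewriting system given by the relations, suitably oriented, has exactly $40$ normal forms. The clean way is to run a Knuth–Bendix / Todd–Coxeter style enumeration: orient the relations into rewrite rules (e.g. $\mathcal{P}^2 \to e$, $\mathcal{L}^2 \to e$, $\mathcal{PLP}\to\mathcal{LPL}$, $\mathcal{UL}\to\mathcal{UP}$, $\mathcal{LU}\to\mathcal{PU}$, $\mathcal{U}^3\to\mathcal{U}$, and the three long ones reducing the higher $\mathcal{P}$-conjugates of $\mathcal{U}$-words), close under overlaps, and count irreducible words. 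Since I would rather not grind through the critical-pair analysis by hand, the honest route — and the one the paper takes — is to delegate this to GAP: construct the monoid as an fp-monoid with these generators and relators and read off its size, or equivalently enumerate the submonoid of the full relation monoid on $28$ points generated by the three matrices and confirm it stabilizes at $40$ elements. The nontrivial mathematical content is that these particular nine relations are \emph{sufficient}: any extra relation holding among the matrices must already be a consequence of them, which is precisely what the equality "presented monoid has $40$ elements = concrete monoid has $40$ elements" certifies once both counts agree.

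For the third stage, to see the monoid has \emph{at least} $40$ elements it suffices to exhibit $40$ pairwise-distinct relations on $H$ obtained as composites of $\mathcal{U},\mathcal{P},\mathcal{L}$; again this is a finite check best done by listing the $40$ Boolean matrices produced during the enumeration in stage two and confirming they are distinct. Combining the two bounds gives $|M_{\mathcal{UPL}}| = 40$ and simultaneously confirms the presentation is complete. I would remark that one should double-check the $S_3$-subgroup $\langle \mathcal{P},\mathcal{L}\rangle$ embeds faithfully (giving $6$ of the $40$ elements, including $e$), and that the relation $\mathcal{U}^3=\mathcal{U}$ together with $\mathcal{UP}=\mathcal{UL}$ already forces the $\mathcal{U}$-involving part to be finite; the three long relations are exactly what is needed to prevent the $\mathcal{P}$-conjugates $\mathcal{P}w\mathcal{P}$ of $\mathcal{U}$-words from generating new elements indefinitely. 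The verification is routine but tedious, so citing GAP for the enumeration is the appropriate level of rigor here.
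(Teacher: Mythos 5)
Your plan is correct and coincides with the paper's own treatment: the paper offers no written proof beyond the remark that the structure of $M_{\mathcal{UPL}}$ ``can be determined by hand through an exhaustive enumeration, or more simply with any computational algebra software, such as GAP,'' which is precisely the Boolean-matrix enumeration plus presentation check you describe. Your three-stage organization (verify the relators, bound the presented monoid above by rewriting/coset enumeration, bound the concrete monoid below by exhibiting 40 distinct relations on $H$) is a sound and somewhat more explicit account of the same computational verification.
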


The Cayley graph of this monoid is presented on Figure \ref{fig:UPLCayley}. The only inversible elements of the $M_{\mathcal{UPL}}$ monoid belong to the set $\{e, \mathcal{L}, \mathcal{P}, \mathcal{LP}, \mathcal{PL}, \mathcal{LPL}\}$, which forms a subgroup in $M_{\mathcal{UPL}}$ isomorphic to the dihedral group $D_6$ generated by the neo-Riemannian operations $P$ and $L$.

In view of building PK-isographies between relational PK-Nets on $M_{\mathcal{UPL}}$, the next proposition establishes the structure of the automorphism group of the $M_{\mathcal{UPL}}$ monoid.

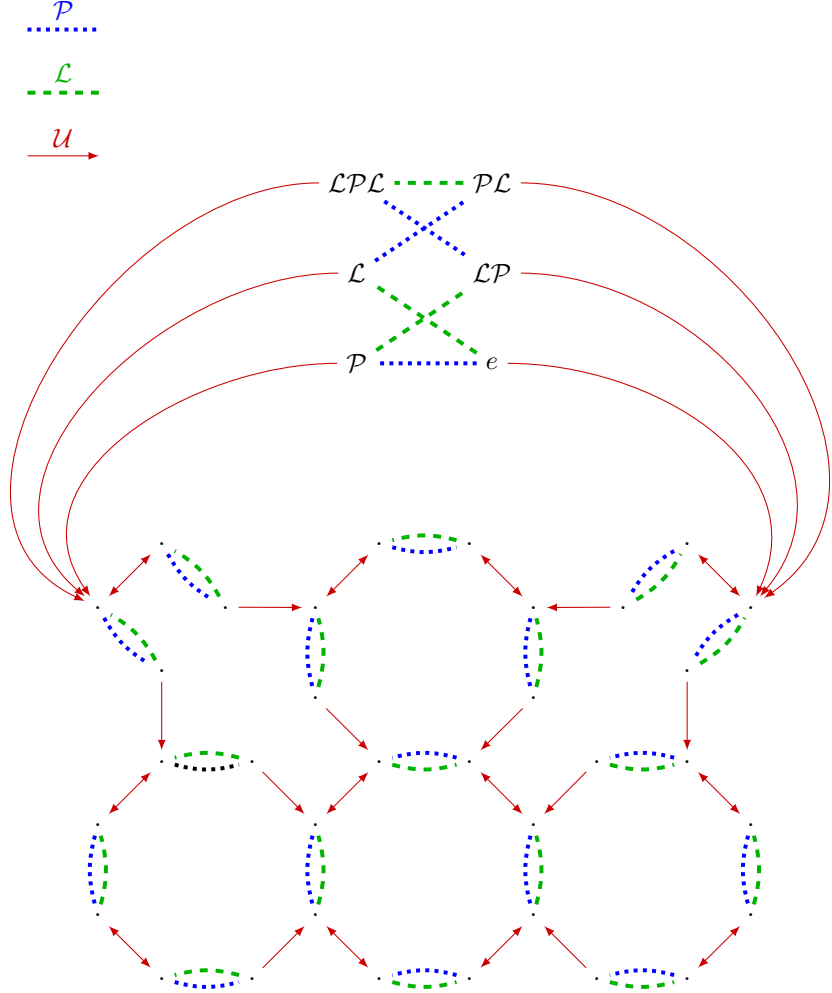
\begin{figure}
\begin{center}
\begin{tikzpicture}[scale=1.2]
	\node (A) at (-4,5.7) {};
	\node (B) at (-3,5.7) {};
	\draw[-,>=latex, dotted,color=blue, line width=1.5] (A) to node[above,midway]{$\mathcal{P}$} (B) ;
	\node (C) at (-4,5) {};
	\node (D) at (-3,5) {};
	\draw[-,>=latex,dashed,color=black!30!green, line width=1.5] (C) to node[above,midway]{$\mathcal{L}$} (D) ;
	\node (E) at (-4,4.3) {};
	\node (F) at (-3,4.3) {};
	\draw[->,>=latex, color=black!20!red] (E) to node[above,midway]{$\mathcal{U}$} (F) ;

	\node (P) at (-0.25,2) {$\mathcal{P}$};
	\node (e) at (1.25,2) {$e$};
	\node (L) at (-0.25,3) {$\mathcal{L}$};
	\node (LP) at (1.25,3) {$\mathcal{LP}$};
	\node (LPL) at (-0.25,4) {$\mathcal{LPL}$};
	\node (PL) at (1.25,4) {$\mathcal{PL}$};
	\draw[-,>=latex, dotted, color=blue, line width=1.5] (e) to (P) ;
	\draw[-,>=latex, dotted, color=blue, line width=1.5] (L) to (PL) ;
	\draw[-,>=latex, dotted, color=blue, line width=1.5] (LP) to (LPL) ;
	\draw[-,>=latex, dashed, color=black!30!green, line width=1.5] (e) to (L) ;
	\draw[-,>=latex, dashed, color=black!30!green, line width=1.5] (P) to (LP) ;
	\draw[-,>=latex, dashed, color=black!30!green, line width=1.5] (PL) to (LPL) ;

	\node (A1) at (0,0) {$.$};
	\node (A2) at (1,0) {$.$};
	\node (A3) at (1.707,-0.707) {$.$};
	\node (A4) at (1.707,-1.707) {$.$};
	\node (A5) at (-0.707,-1.707) {$.$};
	\node (A6) at (-0.707,-0.707) {$.$};
	\node (A7) at (1,-2.41) {$.$};
	\node (A8) at (0,-2.41) {$.$};

	\node (A9) at (1.707,-0.707-2.41) {$.$};
	\node (A10) at (1.707,-1.707-2.41) {$.$};
	\node (A11) at (1,-2.41-2.41) {$.$};
	\node (A12) at (0,-2.41-2.41) {$.$};
	\node (A13) at (-0.707,-1.707-2.41) {$.$};
	\node (A14) at (-0.707,-0.707-2.41) {$.$};

	\node (A15) at (1+2.41,-2.41) {$.$};
	\node (A16) at (0+2.41,-2.41) {$.$};
	\node (A17) at (1.707+2.41,-0.707-2.41) {$.$};
	\node (A18) at (1.707+2.41,-1.707-2.41) {$.$};
	\node (A19) at (1+2.41,-2.41-2.41) {$.$};
	\node (A20) at (0+2.41,-2.41-2.41) {$.$};

	\node (A21) at (1-2.41,-2.41) {$.$};
	\node (A22) at (0-2.41,-2.41) {$.$};
	\node (A23) at (1-2.41,-2.41-2.41) {$.$};
	\node (A24) at (0-2.41,-2.41-2.41) {$.$};
	\node (A25) at (-0.707-2.41,-1.707-2.41) {$.$};
	\node (A26) at (-0.707-2.41,-0.707-2.41) {$.$};	

	\node (A27) at (1+2.41,-2.41+1.0) {$.$};
	\node (U) at (1+2.41+0.707,-2.41+0.707+1.0) {$.$};
	\node (U2) at (1+2.41,-2.41+0.707+1.0+0.707) {$.$};
	\node (A30) at (1+2.41-0.707,-2.41+0.707+1.0) {$.$};

	\node (A31) at (-2.41,-2.41+1.0) {$.$};
	\node (A32) at (-2.41+0.707,-2.41+0.707+1.0) {$.$};
	\node (U2P) at (-2.41,-2.41+0.707+1.0+0.707) {$.$};
	\node (UP) at (-2.41-0.707,-2.41+0.707+1.0) {$.$};

	\draw[->,>=latex,black!20!red] (e) to [out=0,in=65,looseness=1.1] (U) ;
	\draw[->,>=latex,black!20!red] (LP) to [out=0,in=50,looseness=1.1] (U) ;
	\draw[->,>=latex,black!20!red] (PL) to [out=0,in=35,looseness=1.1] (U) ;
	\draw[->,>=latex,black!20!red] (P) to [out=180,in=125,looseness=1.1] (UP) ;
	\draw[->,>=latex,black!20!red] (L) to [out=180,in= 140,looseness=1.1] (UP) ;
	\draw[->,>=latex,black!20!red] (LPL) to [out=180,in= 155,looseness=1.1] (UP) ;
	
	\draw[<->,>=latex,black!20!red] (U) to (U2) ;
	\draw[<->,>=latex,black!20!red] (UP) to (U2P) ;
	\draw[->,>=latex,black!20!red] (A27) to (A15) ;
	\draw[->,>=latex,black!20!red] (A31) to (A22) ;
	\draw[->,>=latex,black!20!red] (A32) to (A6) ;
	\draw[->,>=latex,black!20!red] (A30) to (A3) ;
	\draw[->,>=latex,black!20!red] (A5) to (A8) ;
	\draw[->,>=latex,black!20!red] (A21) to (A14) ;
	\draw[->,>=latex,black!20!red] (A4) to (A7) ;
	\draw[->,>=latex,black!20!red] (A16) to (A9) ;
	\draw[->,>=latex,black!20!red] (A20) to (A10) ;
	\draw[->,>=latex,black!20!red] (A23) to (A13) ;
	\draw[<->,>=latex,black!20!red] (A22) to (A26) ;
	\draw[<->,>=latex,black!20!red] (A24) to (A25) ;
	\draw[<->,>=latex,black!20!red] (A15) to (A17) ;
	\draw[<->,>=latex,black!20!red] (A18) to (A19) ;
	\draw[<->,>=latex,black!20!red] (A10) to (A11) ;
	\draw[<->,>=latex,black!20!red] (A12) to (A13) ;
	\draw[<->,>=latex,black!20!red] (A8) to (A14) ;
	\draw[<->,>=latex,black!20!red] (A7) to (A9) ;
	\draw[<->,>=latex,black!20!red] (A1) to (A6) ;
	\draw[<->,>=latex,black!20!red] (A2) to (A3) ;
	
	\draw[-,>=latex,dotted, color=blue, line width=1.5] (U2) to[bend right=15] (A30) ;
	\draw[-,>=latex,dashed, color=black!30!green, line width=1.5] (A30) to[bend right=15] (U2) ;

	\draw[-,>=latex,dotted, color=blue, line width=1.5] (A1) to[bend right=15] (A2) ;
	\draw[-,>=latex,dashed, color=black!30!green, line width=1.5] (A2) to[bend right=15] (A1) ;

	\draw[-,>=latex,dotted, color=blue, line width=1.5] (A6) to[bend right=15] (A5) ;
	\draw[-,>=latex,dashed, color=black!30!green, line width=1.5] (A5) to[bend right=15] (A6) ;

	\draw[-,>=latex,dotted, color=blue, line width=1.5] (A3) to[bend right=15] (A4) ;
	\draw[-,>=latex,dashed, color=black!30!green, line width=1.5] (A4) to[bend right=15] (A3) ;

	\draw[-,>=latex,dotted, color=blue, line width=1.5] (A7) to[bend right=15] (A8) ;
	\draw[-,>=latex,dashed, color=black!30!green, line width=1.5] (A8) to[bend right=15] (A7) ;

	\draw[-,>=latex,dotted, line width=1.5] (A22) to[bend right=15] (A21) ;
	\draw[-,>=latex,dashed, color=black!30!green, line width=1.5] (A21) to[bend right=15] (A22) ;

	\draw[-,>=latex,dotted, color=blue, line width=1.5] (A26) to[bend right=15] (A25) ;
	\draw[-,>=latex,dashed, color=black!30!green, line width=1.5] (A25) to[bend right=15] (A26) ;

	\draw[-,>=latex,dotted, color=blue, line width=1.5] (A14) to[bend right=15] (A13) ;
	\draw[-,>=latex,dashed, color=black!30!green, line width=1.5] (A13) to[bend right=15] (A14) ;

	\draw[-,>=latex,dotted, color=blue, line width=1.5] (A24) to[bend right=15] (A23) ;
	\draw[-,>=latex,dashed, color=black!30!green, line width=1.5] (A23) to[bend right=15] (A24) ;

	\draw[-,>=latex,dotted, color=blue, line width=1.5] (A11) to[bend right=15] (A12) ;
	\draw[-,>=latex,dashed, color=black!30!green, line width=1.5] (A12) to[bend right=15] (A11) ;

	\draw[-,>=latex,dotted, color=blue, line width=1.5] (A9) to[bend right=15] (A10) ;
	\draw[-,>=latex,dashed, color=black!30!green, line width=1.5] (A10) to[bend right=15] (A9) ;

	\draw[-,>=latex,dotted, color=blue, line width=1.5] (A17) to[bend right=15] (A18) ;
	\draw[-,>=latex,dashed, color=black!30!green, line width=1.5] (A18) to[bend right=15] (A17) ;

	\draw[-,>=latex,dotted, color=blue, line width=1.5] (A19) to[bend right=15] (A20) ;
	\draw[-,>=latex,dashed, color=black!30!green, line width=1.5] (A20) to[bend right=15] (A19) ;

	\draw[-,>=latex,dotted, color=blue, line width=1.5] (A15) to[bend right=15] (A16) ;
	\draw[-,>=latex,dashed, color=black!30!green, line width=1.5] (A16) to[bend right=15] (A15) ;

	\draw[-,>=latex,dotted, color=blue, line width=1.5] (U) to[bend right=15] (A27) ;
	\draw[-,>=latex,dashed, color=black!30!green, line width=1.5] (A27) to[bend right=15] (U) ;

	\draw[-,>=latex,dotted, color=blue, line width=1.5] (UP) to[bend right=15] (A31) ;
	\draw[-,>=latex,dashed, color=black!30!green, line width=1.5] (A31) to[bend right=15] (UP) ;

	\draw[-,>=latex,dotted, color=blue, line width=1.5] (U2P) to[bend right=15] (A32) ;
	\draw[-,>=latex,dashed, color=black!30!green, line width=1.5] (A32) to[bend right=15] (U2P) ;
	
\end{tikzpicture}
\end{center}
\caption{The Cayley graph of the monoid $M_{\mathcal{UPL}}$ generated by the relations $\mathcal{U}$, $\mathcal{P}$, and $\mathcal{L}$. The relations $\mathcal{L}$ and $\mathcal{P}$ are involutions, and are represented as arrowless dashed and dotted lines.}
\label{fig:UPLCayley}
\end{figure}

\begin{proposition}
The automorphism group of the $M_{\mathcal{UPL}}$ monoid is isomorphic to $D_6 \times \mathbb{Z}_2$.
\end{proposition}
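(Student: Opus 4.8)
The plan is to use the fact that any monoid automorphism preserves the group of units, reducing the computation to (i) the automorphisms of that unit group and (ii) the automorphisms that fix every unit.

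Let $U$ be the group of units of $M_{\mathcal{UPL}}$. By the remark following the presentation, $U=\{e,\mathcal{L},\mathcal{P},\mathcal{LP},\mathcal{PL},\mathcal{LPL}\}\cong D_6$, generated by the two involutions $\mathcal{P}$ and $\mathcal{L}$; recall $D_6\cong S_3$ is complete, so $\operatorname{Aut}(U)\cong\operatorname{Inn}(U)\cong D_6$. Every $\Phi\in\operatorname{Aut}(M_{\mathcal{UPL}})$ maps units to units, hence restricts to an automorphism of $U$, giving a homomorphism $\rho\colon\operatorname{Aut}(M_{\mathcal{UPL}})\to\operatorname{Aut}(U)$. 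First I would check that $\rho$ is split surjective: for a unit $u$, conjugation $c_u\colon x\mapsto uxu^{-1}$ is an automorphism of $M_{\mathcal{UPL}}$, and since $Z(U)=\{e\}$ the composite $u\mapsto\rho(c_u)$ is exactly the map sending $u$ to the inner automorphism by $u$, which is an isomorphism $U\xrightarrow{\sim}\operatorname{Aut}(U)$. Thus $\operatorname{Aut}(M_{\mathcal{UPL}})$ is the split extension of $\operatorname{Aut}(U)\cong D_6$ by $K:=\ker\rho$, and $c$ provides a copy $c(U)\cong D_6$ of $\operatorname{Aut}(U)$ inside $\operatorname{Aut}(M_{\mathcal{UPL}})$ with $K\cap c(U)=\{\mathrm{id}\}$.

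Next I would determine $K$, the automorphisms fixing $\mathcal{P}$ and $\mathcal{L}$ (hence all of $U$). Since $\mathcal{U},\mathcal{P},\mathcal{L}$ generate $M_{\mathcal{UPL}}$, such a $\Phi$ is determined by $v:=\Phi(\mathcal{U})$; conversely, by the universal property of the presentation, $\mathcal{U}\mapsto v,\ \mathcal{P}\mapsto\mathcal{P},\ \mathcal{L}\mapsto\mathcal{L}$ extends to an endomorphism exactly when $v$ satisfies, with $\mathcal{P},\mathcal{L}$ in their original roles, all the presentation relations involving $\mathcal{U}$, that is $v^3=v$, $v\mathcal{P}=v\mathcal{L}$, $\mathcal{P}v=\mathcal{L}v$, $v^2\mathcal{P}v^2=\mathcal{P}v^2\mathcal{P}v^2\mathcal{P}$, $(v\mathcal{P})^2v^2=\mathcal{P}(v\mathcal{P})^2v^2\mathcal{P}$, and $v^2(\mathcal{P}v)^2=\mathcal{P}v^2(\mathcal{P}v)^2\mathcal{P}$. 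Moreover $v$ must be a non-unit (otherwise the image lies in $U\subsetneq M_{\mathcal{UPL}}$) with $\langle v,\mathcal{P},\mathcal{L}\rangle=M_{\mathcal{UPL}}$, which forces $\Phi$ surjective and hence bijective since $M_{\mathcal{UPL}}$ is finite. A finite search through the $40$ elements (by hand or with GAP, as for the previous proposition) shows that exactly two elements meet all these conditions, namely $\mathcal{U}$ and one further element $\mathcal{U}'$; the associated nontrivial automorphism $\psi$ swaps $\mathcal{U}\leftrightarrow\mathcal{U}'$, fixes $U$, and is an involution, so $K\cong\mathbb{Z}_2$.

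Finally I would show the split extension is a direct product. For $\psi\in K$ and any unit $u$, multiplicativity of $\psi$ with $\psi(u)=u$ and $\psi(u^{-1})=u^{-1}$ gives $\psi(uxu^{-1})=u\,\psi(x)\,u^{-1}$, i.e. $\psi\circ c_u=c_u\circ\psi$; hence $K$ centralises $c(U)$. Together with $K\cap c(U)=\{\mathrm{id}\}$ and $|\operatorname{Aut}(M_{\mathcal{UPL}})|=|K|\cdot|\operatorname{Aut}(U)|=2\cdot 6=12=|K\times c(U)|$, this yields $\operatorname{Aut}(M_{\mathcal{UPL}})=K\times c(U)\cong\mathbb{Z}_2\times D_6$. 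The main obstacle is the second step: verifying that precisely one nontrivial automorphism fixes the units, which comes down to the finite but delicate check that only $\mathcal{U}$ and one other element of $M_{\mathcal{UPL}}$ simultaneously satisfy all six relations (the last three being the awkward ones to handle by hand) and generate $M_{\mathcal{UPL}}$ together with $\mathcal{P}$ and $\mathcal{L}$; as with the structure of $M_{\mathcal{UPL}}$ itself, this is most safely carried out with a computational algebra package.
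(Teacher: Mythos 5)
Your proof is correct and follows the same overall skeleton as the paper's: both restrict an automorphism to the group of units $\{e,\mathcal{L},\mathcal{P},\mathcal{LP},\mathcal{PL},\mathcal{LPL}\}\cong D_6$, obtain a homomorphism onto $\mathrm{Aut}(D_6)\cong D_6$, and identify its kernel as $\mathbb{Z}_2$ by an exhaustive (computer) search over the admissible images of $\mathcal{U}$ --- the paper finds exactly $N(\mathcal{U})=\mathcal{U}$ and $N(\mathcal{U})=\mathcal{PUP}$, the latter being your $\mathcal{U}'$. Where you genuinely diverge is in the final step, showing the extension is a direct product. The paper parametrizes automorphisms by pairs $(g,k)$ and computes the composition law by hand, using the relations $\mathcal{PU}=\mathcal{LU}$ and $\mathcal{UP}=\mathcal{UL}$ to reduce $N_2(\mathcal{P}^{k_1}\mathcal{U}\mathcal{P}^{k_1})$ to $\mathcal{L}^{k_1+k_2}\mathcal{U}\mathcal{L}^{k_1+k_2}$ and conclude $(g_2,k_2)(g_1,k_1)=(g_2g_1,k_2+k_1)$. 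You instead note that conjugation by units gives a canonical section $c\colon D_6\to\mathrm{Aut}(M_{\mathcal{UPL}})$ of the restriction map (using that $D_6\cong S_3$ is complete), and that any automorphism fixing all units automatically commutes with every $c_u$; this yields the direct product without further manipulation of the defining relations, and as a bonus it establishes surjectivity of the restriction map onto $\mathrm{Aut}(D_6)$, which the paper only asserts implicitly through its parametrization. Both arguments lean on the same computational check for the kernel, so neither is more elementary there; your route is the more conceptual and robust one for the splitting, while the paper's explicit calculation has the advantage of exhibiting the nontrivial kernel element concretely as $N(\mathcal{U})=\mathcal{PUP}$.
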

\begin{proof}
Any automorphism $N$ of the $M_{\mathcal{UPL}}$ monoid is entirely determined by the image of its generators. Since $\mathcal{L}$ and $\mathcal{P}$ are the only inversible generators, their images belong to the $D_6$ subgroup $\{e, \mathcal{L}, \mathcal{P}, \mathcal{LP}, \mathcal{PL}, \mathcal{LPL}\}$ and induce an isomorphism of this subgroup. From known results about dihedral groups, we have $\text{Aut}(D_6) \simeq D_6$, and the images are given by $N( \mathcal{P} ) = ( \mathcal{PL} )^{m+n} \mathcal{L}$, and $N( \mathcal{L} ) = ( \mathcal{PL} )^n \mathcal{L}$, with $m \in \{-1,1\}$ and $n \in \{0,1,2\}$.

This defines a homomorphism $\Phi$ from $\text{Aut}(M_{\mathcal{UPL}})$ to $D_6$ which associates to any automorphism $N$ of $M_{\mathcal{UPL}}$ the automorphism of $D_6$ induced by the images of $\mathcal{L}$ and $\mathcal{P}$.

The kernel of $\Phi$ consists of the subgroup of $\text{Aut}(M_{\mathcal{UPL}})$ formed by the automorphisms $N$ such that $N(\mathcal{L})=\mathcal{L}$ and $N(\mathcal{P})=\mathcal{P}$. It is thus uniquely determined by the possible images of the remaining generator $\mathcal{U}$ by $N$. An exhaustive computer search shows that only $N(\mathcal{U})=\mathcal{U}$ and $N(\mathcal{U})=\mathcal{PUP}$ yield valid automorphisms, i.e. $N(\mathcal{U}) = \mathcal{P}^k\mathcal{U}\mathcal{P}^k$, with $k \in \{0,1\}$ considered as the additive cyclic group $\mathbb{Z}_2$.
Thus $\text{Aut}(M_{\mathcal{UPL}})$ is an extension of $\mathbb{Z}_2$ by $\mathbb{D}_6$, and any automorphism $N$ is uniquely determined by the pair $(g,k)$ where $g$ is an element of $\text{Aut}(D_6)$, and $k$ is an element of $\mathbb{Z}_2$. 

Let $N_1=(g_1,k_1)$ and $N_2=(g_2,k_2)$ be two automorphisms of the $M_{\mathcal{UPL}}$ monoid and consider $N=N_2N_1=(g,k)$. From the discussion above, we have $g=g_2g_1$. The image of the generator $\mathcal{U}$ by $N$ is
$$N(\mathcal{U})=N_2(\mathcal{P}^{k_1}\mathcal{U}\mathcal{P}^{k_1}),$$
which is equal to
$$N(\mathcal{U})=((\mathcal{PL})^{m_2+n_2}\mathcal{L})^{k_1}\mathcal{P}^{k_2}\mathcal{U}\mathcal{P}^{k_2}((\mathcal{PL})^{m_2+n_2}\mathcal{L})^{k_1}.$$
Since we have $\mathcal{PU}=\mathcal{LU}$ and $\mathcal{UP}=\mathcal{UL}$, all the terms $\mathcal{P}$ in this last equation can be replaced by $\mathcal{L}$, and since $\mathcal{L}^2=e$, this yields
$$N(\mathcal{U})=\mathcal{L}^{k_1+k_2}\mathcal{U}\mathcal{L}^{k_1+k_2}.$$
Hence, we have $N=N_2N_1=(g,k)=(g_2g_1,k_2+k_1)$, thus proving that $\text{Aut}(M_{\mathcal{UPL}})$ is isomorphic to $D_6 \times \mathbb{Z}_2$.
\end{proof}

\begin{figure}
\centering

\subfigure[]{
\includegraphics[scale=0.65]{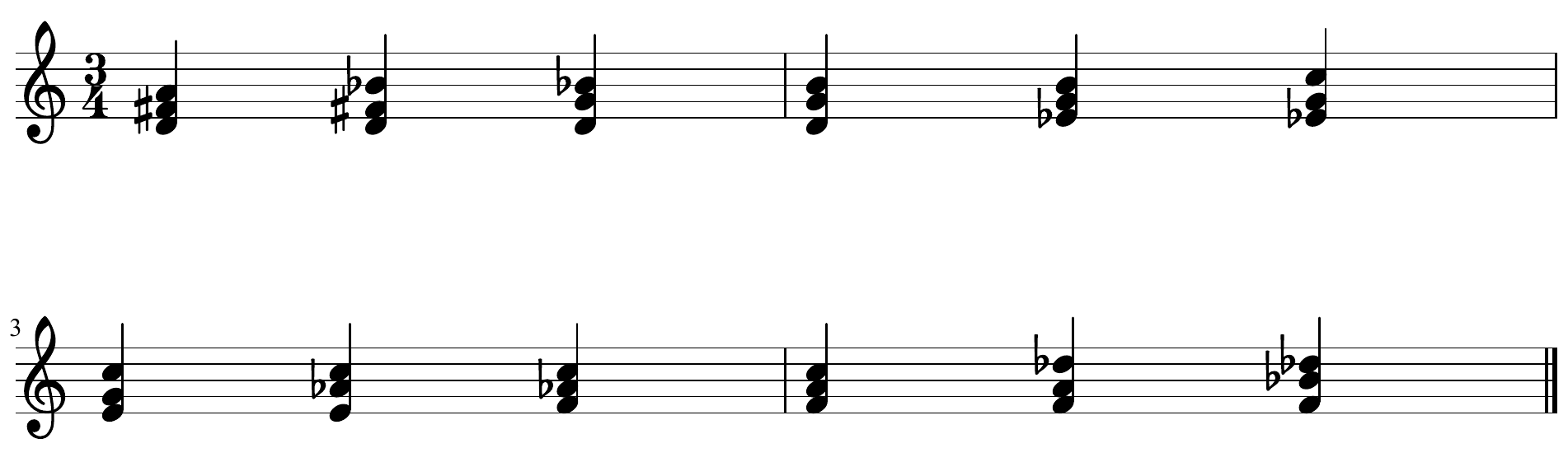}
\label{subfig:Muse-TAB}
}

\vspace{1.0cm}
\subfigure[]{
\begin{tikzpicture}[scale=1]
	\node (A) at (-6,0) {$D_M$};
	\node (B) at (-4,0) {$D_\text{aug}$};
	\node (C) at (-2,0) {$G_m$};
	\draw[->,>=latex] (A) to node[above,midway]{$\mathcal{U}$} (B) ;
	\draw[->,>=latex] (B) to node[above,midway]{$\mathcal{U}$} (C) ;
	
	\node (D) at (0,0) {$G_M$};
	\node (E) at (2,0) {$B_\text{aug}$};
	\node (F) at (4,0) {$C_m$};

	\draw[->,>=latex] (C) to node[above,midway]{$\mathcal{P}$} (D) ;
	\draw[->,>=latex] (D) to node[above,midway]{$\mathcal{U}$} (E) ;
	\draw[->,>=latex] (E) to node[above,midway]{$\mathcal{U}$} (F) ;

	\node (G) at (-6,-2) {$C_M$};
	\node (H) at (-4,-2) {$C_\text{aug}$};
	\node (I) at (-2,-2) {$F_m$};

	\draw[->,>=latex] (F) to node[below,midway]{$\mathcal{P}$} (G) ;
	\draw[->,>=latex] (G) to node[below,midway]{$\mathcal{U}$} (H) ;
	\draw[->,>=latex] (H) to node[below,midway]{$\mathcal{U}$} (I) ;

	\node (J) at (0,-2) {$F_M$};
	\node (K) at (2,-2) {$F_\text{aug}$};
	\node (L) at (4,-2) {${B_\flat}_m$};

	\draw[->,>=latex] (I) to node[below,midway]{$\mathcal{P}$} (J) ;
	\draw[->,>=latex] (J) to node[below,midway]{$\mathcal{U}$} (K) ;
	\draw[->,>=latex] (K) to node[below,midway]{$\mathcal{U}$} (L) ;
\end{tikzpicture}
\label{subfig:Muse-TAB-Analysis1}
} 

\vspace{1.0cm}
\subfigure[]{
\begin{tikzpicture}[scale=1]
	\node (A) at (-8,0) {$D_M$};
	\node (B) at (-7,-1.3) {$D_\text{aug}$};
	\node (C) at (-8,-2.3) {$G_m$};
	\draw[->,>=latex] (A) to node[shift={(0.3,0.15)}]{$\mathcal{U}$} (B) ;
	\draw[->,>=latex] (B) to node[shift={(0.3,-0.15)}]{$\mathcal{U}$} (C) ;
	\draw[->,>=latex] (A) to node[left,midway]{$\mathcal{U}^2$} (C) ;
	
	\node (D) at (-4,0) {$G_M$};
	\node (E) at (-3,-1.3) {$B_\text{aug}$};
	\node (F) at (-4,-2.3) {$C_m$};

	\draw[->,>=latex] (D) to node[shift={(0.3,0.15)}]{$\mathcal{U}$} (E) ;
	\draw[->,>=latex] (E) to node[shift={(0.3,-0.15)}]{$\mathcal{U}$} (F) ;
	\draw[->,>=latex] (D) to node[left,midway]{$\mathcal{U}^2$} (F) ;
	
	\node (T1) at (-6.5,-1.3) {};
	\node (T2) at (-4.75,-1.3) {};
	\draw[->,>=latex] (T1) to node[above,midway] {$(N=Id,F\nu)$}  (T2) ;

	\node (G) at (0,0) {$C_M$};
	\node (H) at (1,-1.3) {$C_\text{aug}$};
	\node (I) at (0,-2.3) {$F_m$};

	\draw[->,>=latex] (G) to node[shift={(0.3,0.15)}]{$\mathcal{U}$} (H) ;
	\draw[->,>=latex] (H) to node[shift={(0.3,-0.15)}]{$\mathcal{U}$} (I) ;
	\draw[->,>=latex] (G) to node[left,midway]{$\mathcal{U}^2$} (I) ;

	\node (T3) at (-2.5,-1.3) {};
	\node (T4) at (-0.75,-1.3) {};
	\draw[->,>=latex] (T3) to node[above,midway] {$(N=Id,F\nu)$}  (T4);

	\node (J) at (4,0) {$F_M$};
	\node (K) at (5,-1.3) {$F_\text{aug}$};
	\node (L) at (4,-2.3) {${B_\flat}_m$};

	\draw[->,>=latex] (J) to node[shift={(0.3,0.15)}]{$\mathcal{U}$} (K) ;
	\draw[->,>=latex] (K) to node[shift={(0.3,-0.15)}]{$\mathcal{U}$} (L) ;
	\draw[->,>=latex] (J) to node[left,midway]{$\mathcal{U}^2$} (L) ;

	\node (T5) at (1.5,-1.3) {};
	\node (T6) at (3.25,-1.3) {};
	\draw[->,>=latex] (T5) to node[above,midway] {$(N=Id,F\nu)$} (T6) ;
\end{tikzpicture}
\label{subfig:Muse-TAB-Analysis2}
} 

\caption{\subref{subfig:Muse-TAB} Reduction of the opening progression of \textit{Take A Bow} from Muse (the first twelve chords are represented here). \subref{subfig:Muse-TAB-Analysis1} First transformational analysis in the $M_{\mathcal{UPL}}$ monoid showing the sequential regularity of the progression. \subref{subfig:Muse-TAB-Analysis1} Second transformational analysis in the $M_{\mathcal{UPL}}$ monoid showing the successive transformations of the initial three-chord cell by the homography $(N=Id,F\nu)$ with $\nu(n_M)=(n+5)_M$, $\nu(n_m)=(n+5)_m$, and $\nu(n_\text{aug})=(n+1 \pmod{4})_\text{aug}$.}
\label{fig:Muse-TAB}
\end{figure}

We now illustrate the possibilities offered by the $M_{\mathcal{UPL}}$ monoid for transformational analysis using relational PK-Nets.
Figure \ref{subfig:Muse-TAB} shows a reduction of the opening chord progression of the song \textit{Take A Bow} by the English rock band Muse. This progression proceeds by semitone changes from a major chord to an augmented chord to a minor chord. At this point, the minor chord evolves to a major chord on the same root, i.e. the two chords are related by the neo-Riemannian operation $P$. The same process is then applied five times till the middle of the song (only the first twelve chords are presented in Figure \ref{subfig:Muse-TAB}).

A first transformational analysis of this progression can be realized as follows. We focus here on the first four chords, since the progression further proceeds identically. The data of the monoid $M_{\mathcal{UPL}}$ and the relations over $H$ of its elements defines a functor $S \colon M_{\mathcal{UPL}} \to \mathbf{Rel}$. Let $\Delta$ define the order of the ordinal number $\mathbf{4}$ (whose objects are labelled $X_i$, with $0 \leq i \leq 3$), and let $R$ be the functor from $\Delta$ to $\mathbf{Rel}$ which sends the objects $X_i$ of $\Delta$ to singletons $\{x_i\}$. Let $F$ be the functor from $\Delta$ to $M_{\mathcal{UPL}}$ which sends the non-trivial morphisms $f_{0,1} \colon X_0 \to X_1$ and $f_{1,2} \colon X_1 \to X_2$ of $\Delta$ to $\mathcal{U}$ in $\mathcal{UPL}$, and the non-trivial morphism $f_{2,3} \colon X_2 \to X_3$ of $\Delta$ to $\mathcal{P}$ in $\mathcal{UPL}$.
Finally, let $\phi$ be the left-total lax natural transformation which sends $x_0$ to $D_M$, $x_1$ to $D_\text{aug}$, $x_2$ to $G_m$, and $x_3$ to $G_M$. Then $(R,S,F,\phi)$ is a relational PK-Net describing the opening progression of Figure \subref{subfig:Muse-TAB}. This PK-Net (and its extension to the remaining chords) is shown in a simplified way, by directly labelling the arrows between the chords with the elements of $M_{\mathcal{UPL}}$. Given two chords on the left and right side of a labelled arrow between them, one should remain aware that this does not mean that the chord on the right is the unique image of the chord by the given element of $M_{\mathcal{UPL}}$. From a relational point-of-view, as discussed in Section 2.2, one should consider instead the chord on the right to be \textit{related} to the one on the left by the element of $M_{\mathcal{UPL}}$, among the possibly larger range of possibilities given by the functor $S \colon M_{\mathcal{UPL}} \to \mathbf{Rel}$. For example, given the chord $D_\text{aug}$, there exists six chords $y$ (namely $D_M$, ${F_\sharp}_M$, ${B_\flat}_M$, ${E_\flat}_m$, $B_m$, and $G_m$) such that we have $D_\text{aug} \mathcal{U} y$. Here, the left-total lax natural transformation $\phi$ allows us to select precisely one chord, $G_m$, to explain the given chord progression.

This relational PK-Net shows the regularity of the chord progression, but does not clearly evidence the progression by fifths of the initial three-chord cell. We propose now a second transformational analysis based on a specific PK-Net isography and its iterated application on a relational PK-Net describing this initial cell.

Let $\Delta$ define the order of the ordinal number $\mathbf{3}$ (whose objects are labelled $X_i$, with $0 \leq i \leq 2$), and let $R$ be the functor from $\Delta$ to $\mathbf{Rel}$ which sends the objects $X_i$ of $\Delta$ to singletons $\{x_i\}$. Let $F$ be the functor from $\Delta$ to $M_{\mathcal{UPL}}$ which sends the non-trivial morphisms $f_{0,1} \colon X_0 \to X_1$ and $f_{1,2} \colon X_1 \to X_2$ of $\Delta$ to $\mathcal{U}$ in $\mathcal{UPL}$. Finally, let $\phi$ be the left-total lax natural transformation which sends $x_0$ to $D_M$, $x_1$ to $D_\text{aug}$, $x_2$ to $G_m$. Then $(R,S,F,\phi)$ is a relational PK-Net describing the initial three-chord cell of Figure \subref{subfig:Muse-TAB}.
Consider now the identity functor $N=Id$ on $M_{\mathcal{UPL}}$, along with the left-total lax natural transformation $\nu \colon S \to S$ defined on the set of the major, minor, and augmented triads by $\nu(n_M)=(n+5)_M$, $\nu(n_m)=(n+5)_m$, and $\nu(n_\text{aug})=(n+1 \pmod{4})_\text{aug}$. By applying the PK-isography $(N,F\nu)$ on $(R,S,F,\phi)$, one obtains a second relational PK-Net $(R,S,F,\phi')$ such that $\phi'$ sends $x_0$ to $G_M$, $x_1$ to $B_\text{aug}$, $x_2$ to $C_m$, thus describing the progression by fifth of the initial cell. The successive chords are given by the iterated application of the same PK-isography $(N,F\nu)$.

\subsection{The $M_\mathcal{S}$, $M_\mathcal{T}$, and $M_{\mathcal{ST}}$ monoids}

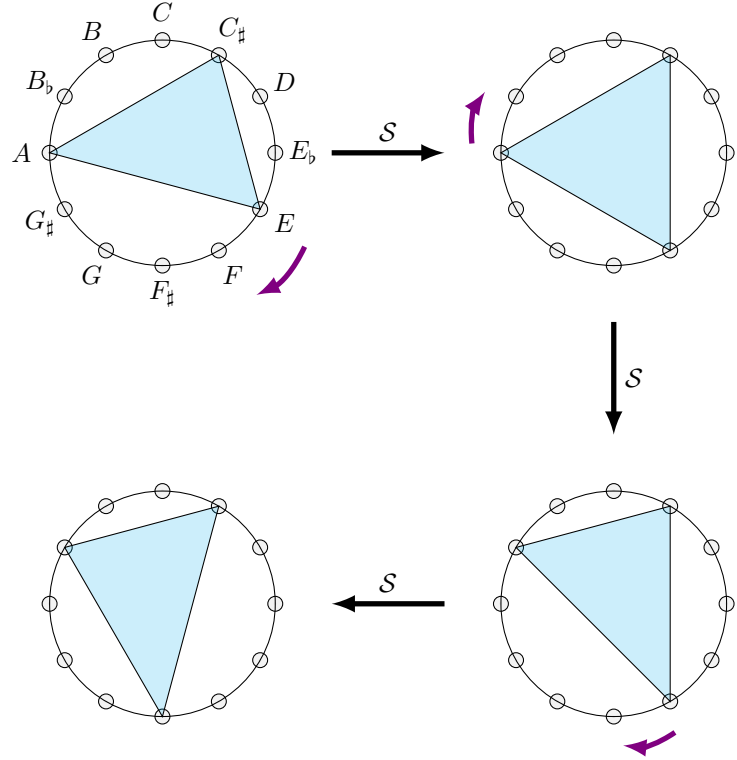
\begin{figure}[h!]
\begin{center}
\begin{tikzpicture}[scale=0.75]
%% red, green, blue, cyan , magenta, yellow, black, gray, darkgray, lightgray, brown, lime, olive, orange, pink, purple, teal, violet and white.
	\draw (-5,0) circle[radius=2.0]; 
	\foreach \i in {0,...,11} {
		\node[circle,draw=black,fill=black, fill opacity = 0.05, inner sep=1pt, minimum size=3pt] (A\i) at ({-5.0+2.0*sin(\i*30)},{2.0*cos(\i*30)}) {.};
	}
	\foreach \i in {0,...,11} {
		\node (B\i) at ({-5.0+3.0*sin(\i*30)},{3.0*cos(\i*30)}) {};
	}
	\foreach \i / \name in {0/$C$,1/$C_\sharp$,2/$D$,3/$E_\flat$,4/$E$,5/$F$,6/$F_\sharp$,7/$G$,8/$G_\sharp$,9/$A$,10/$B_\flat$,11/$B$} {
		\node (N\i) at ({-5.0+2.5*sin(\i*30)},{2.5*cos(\i*30)}) {\name};
	}
	\draw[ draw=black, fill=cyan, fill opacity=0.2] (A9.center) -- (A1.center) -- (A4.center) --cycle;
	\draw[ -latex, line width=2, color=violet] (B4) to[bend left=20] (B5); 

	\draw[->,>=latex, color=black,line width=2,] (-2.0,0.0) to node[above,midway]{$\mathcal{S}$} (0,0) ;

	\draw (3,0) circle[radius=2.0]; 
	\foreach \i in {0,...,11} {
		\node[circle,draw=black,fill=black, fill opacity = 0.05, inner sep=1pt, minimum size=3pt] (C\i) at ({3+2.0*sin(\i*30)},{2.0*cos(\i*30)}) {.};
	}
	\foreach \i in {0,...,11} {
		\node (D\i) at ({3+2.5*sin(\i*30)},{2.5*cos(\i*30)}) {};
	}
	\draw[ draw=black, fill=cyan, fill opacity=0.2] (C9.center) -- (C1.center) -- (C5.center) --cycle;
	\draw[ -latex, line width=2, color=violet] (D9) to[bend left=20] (D10); 

	\draw[->,>=latex, color=black,line width=2,] (3.0,-3.0) to node[right,midway]{$\mathcal{S}$} (3.0,-5.0) ;

	\draw (3.0,-8) circle[radius=2.0]; 
	\foreach \i in {0,...,11} {
		\node[circle,draw=black,fill=black, fill opacity = 0.05, inner sep=1pt, minimum size=3pt] (E\i) at ({3.0+2.0*sin(\i*30)},{-8+2.0*cos(\i*30)}) {.};
	}
	\foreach \i in {0,...,11} {
		\node (F\i) at ({3.0+2.5*sin(\i*30)},{-8+2.5*cos(\i*30)}) {};
	}
	\draw[ draw=black, fill=cyan, fill opacity=0.2] (E5.center) -- (E10.center) -- (E1.center) --cycle;
	\draw[ -latex, line width=2, color=violet] (F5) to[bend left=20] (F6); 

	\draw[->,>=latex, color=black,line width=2,] (0.0,-8.0) to node[above,midway]{$\mathcal{S}$} (-2.0,-8.0) ;

	\draw (-5.0,-8) circle[radius=2.0]; 
	\foreach \i in {0,...,11} {
		\node[circle,draw=black,fill=black, fill opacity = 0.05, inner sep=1pt, minimum size=3pt] (G\i) at ({-5.0+2.0*sin(\i*30)},{-8+2.0*cos(\i*30)}) {.};
	}
	\foreach \i in {0,...,11} {
		\node (H\i) at ({-5.0+2.5*sin(\i*30)},{-8+2.5*cos(\i*30)}) {};
	}
	\draw[ draw=black, fill=cyan, fill opacity=0.2] (G6.center) -- (G10.center) -- (G1.center) --cycle;
\end{tikzpicture}
\end{center}
\caption{A possible path for the progression from $A$ major to $F_\sharp$ major by successive semitone displacement. The labels above the arrows indicate that the chord on the right is related to the chord on the left by the $\mathcal{S}$ relation. The corresponding semitone displacements are indicated by the curved violet arrows.}
\label{fig:S-Progression}
\end{figure}

In the previous monoid of parsimonious relations, we distinguished the subrelations $\mathcal{U}$, $\mathcal{P}$, and $\mathcal{L}$ included in the $\mathcal{P}_{1,0}$ relation in order to differentiate the contributions of the neo-Riemannian operations $P$ and $L$, and the relation $\mathcal{U}$ which bridges the hexatonic system \textit{via} the augmented triads. However, we could also consider Douthett's $\mathcal{P}_{1,0}$ relation as a whole, to focus on the parsimonious voice-leading between chords by semitone displacement.

For clarity of notation, we rename the $\mathcal{P}_{1,0}$ relation as $\mathcal{S}$ and we recall its definition on the set of major, minor, and augmented triads.

\begin{definition}
Let $H$ be the set of the 24 major and minor triads and the four augmented triads, i.e. $H=\{n_M, 0 \leq n \leq 11\} \cup \{n_m, 0 \leq n \leq 11\} \cup \{n_\text{aug}, 0 \leq n \leq 3\}$. The $\mathcal{S}$ relation over $H$ is defined as the symmetric relation such that we have
\begin{itemize}
\setlength\itemsep{0.05em}
\item{$n_M \mathcal{S} n_m$,  $n_M \mathcal{S} (n+4)_m$, and $n_M \mathcal{S} (n \pmod{4})_\text{aug}$ for $0 \leq n \leq 11$, and}
\item{$n_m \mathcal{S} n_M$,  $n_m \mathcal{S} (n+8)_M$, and $n_m \mathcal{S} ((n+3) \pmod{4})_\text{aug}$ for $0 \leq n \leq 11$.}
\end{itemize}
\end{definition}

We are now interested in the monoid generated by the relation $\mathcal{S}$, under the composition of relations introduced in Section 2.1. The structure of this monoid can easily be determined by hand or with a computer.

\begin{proposition}
The monoid $M_\mathcal{S}$ generated by the relation $\mathcal{S}$ has for presentation
$$M_\mathcal{S} = \langle \mathcal{S} \mid \mathcal{S}^7= \mathcal{S}^5 \rangle.$$
\end{proposition}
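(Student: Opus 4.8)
The plan is to exploit that $M_{\mathcal S}$ is \emph{monogenic}: it is generated by the single element $\mathcal S$, so its elements are $e=\mathcal S^0,\mathcal S^1,\mathcal S^2,\dots$. Since $\mathcal S$ is a binary relation on the finite set $H$ (equivalently, a Boolean matrix $M\in\{0,1\}^{H\times H}$), the powers $M,M^2,\dots$ range over a finite set, hence the sequence is eventually periodic and $M_{\mathcal S}$ is finite; by the structure theory of monogenic monoids it is then isomorphic to $\langle a\mid a^{i+p}=a^i\rangle$ for a unique \emph{index} $i\ge 0$ and \emph{period} $p\ge 1$, and it has $i+p$ elements. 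So the whole proposition reduces to showing $i=5$ and $p=2$, i.e. that $\mathcal S^7=\mathcal S^5$ while $e,\mathcal S,\dots,\mathcal S^6$ are pairwise distinct.

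First I would record two properties of the graph underlying $\mathcal S$, which is exactly Douthett's ``Cube Dance'' of Figure~\ref{fig:CubeDance}. It is \emph{bipartite}: one checks directly from the three clauses defining $\mathcal S$ that the colouring $n_M\mapsto n\bmod 2$, $n_m\mapsto (n+1)\bmod 2$, $k_\text{aug}\mapsto (k+1)\bmod 2$ sends every edge across the two colour classes (using $(n\bmod 4)\equiv n\pmod 2$). And it is \emph{connected}: the four hexatonic hexagons of ``HexaCycles'' are strung into a cycle by the four augmented triads, since $k_\text{aug}$ is $\mathcal S$-related to the major triads of one hexatonic system and to the minor triads of the next. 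Now $(M^\ell)_{x,y}=1$ means there is a walk of length exactly $\ell$ from $x$ to $y$; in a connected bipartite graph with no isolated vertices this holds iff $\ell\ge d(x,y)$ and $\ell\equiv d(x,y)\pmod 2$. Hence the even powers $e=M^0\le M^2\le M^4\le\cdots$ increase to the ``same colour'' relation and the odd powers $M\le M^3\le M^5\le\cdots$ increase to the ``different colour'' relation; these limits have disjoint nonempty supports, so $p=2$, and in particular any even power differs from any odd power.

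It remains to pin down the index. From the description above, the even subsequence reaches its limit at exponent $D_0:=\max\{d(x,y):x,y\text{ same colour}\}$ and the odd one at $D_1:=\max\{d(x,y):x,y\text{ different colour}\}$, so $i=\min(D_0,D_1)$, while distinctness of $e,\mathcal S,\dots,\mathcal S^6$ amounts to the chains $e<M^2<M^4$ (needs $D_0>4$) and $M<M^3<M^5$ (needs $D_1>3$), together with the ``even $\ne$ odd'' fact already shown. So I would compute the relevant eccentricities in the Cube Dance graph: the claim is $D_1=5$ and $D_0=6$, which yields $\mathcal S^5=\mathcal S^7$, $\mathcal S^4\ne\mathcal S^6$, $\mathcal S^3\ne\mathcal S^5$, hence index $5$, period $2$, and the presentation $\langle\mathcal S\mid\mathcal S^7=\mathcal S^5\rangle$ with seven elements. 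Equivalently, and this is the route suggested by the ``by hand or with a computer'' remark (cf. GAP usage elsewhere in the paper), one just forms $M,M^2,\dots,M^8$, observes $M^7=M^5$, and checks that $|\mathcal S^\ell|$ is strictly increasing on $\ell\in\{0,2,4,6\}$ and on $\ell\in\{1,3,5\}$, which already forces the seven powers to be distinct.

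The main obstacle is exactly this quantitative step: confirming that the two eccentricities are precisely $6$ and $5$ — i.e. exhibiting a pair of triads at distance $6$ and a pair of triads of opposite colour at distance $5$ in the Cube Dance, and verifying that no pair does better. This requires actually navigating the geometry of the graph (or running the finite computation); everything else is formal, following from the monogenic-monoid structure theorem and the easy bipartiteness and connectedness observations. No delicate case analysis is needed beyond locating $D_0$ and $D_1$, since once those are known the presentation is completely determined.
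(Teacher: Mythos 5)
The paper offers no actual proof here---it simply remarks that the structure of $M_\mathcal{S}$ ``can easily be determined by hand or with a computer''---so your proposal is not so much a different route as the only route, made explicit and organized. Your structural reduction is correct and is genuinely more informative than the paper's remark: the monogenic-monoid index/period theorem, the bipartiteness of the Cube Dance under the colouring $n_M\mapsto n\bmod 2$, $n_m\mapsto(n+1)\bmod 2$, $k_{\text{aug}}\mapsto(k+1)\bmod 2$ (which does send every one of the five defining clauses of $\mathcal S$ across the colour classes), and connectedness together force the period to be $2$ and identify the index with the relevant graph eccentricities. The only incomplete step is the one you flag yourself: the values $D_0=6$, $D_1=5$ are asserted, not established, and they are precisely where the exponents $5$ and $7$ come from, so as written the argument is a correct reduction rather than a finished proof. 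For what it is worth, the numbers do check out, and can be seen by hand from the block structure of Figure~\ref{fig:CubeDance}: the four hexatonic hexagons $H_0,\dots,H_3$ (roots $\equiv j\bmod 4$) are strung in a $4$-cycle by the augmented triads, with $j_{\text{aug}}$ adjacent to the majors of $H_j$ and the minors of $H_{j+1}$; any walk between the opposite systems $H_0$ and $H_2$ must traverse two augmented triads, and a breadth-first search from $C_M$ gives $d(C_M,D_m)=5$ and $d(C_M,D_M)=6$ while no pair exceeds these bounds, whence $D_1=5$, $D_0=6$, index $5$, period $2$. Your closing alternative---compute $M^0,\dots,M^8$ as Boolean matrices, observe $M^7=M^5$, and use the strict growth of $|\mathcal S^\ell|$ along each parity class together with the even/odd disjointness to force distinctness of $e,\mathcal S,\dots,\mathcal S^6$---is a perfectly rigorous finite verification and is presumably what the authors mean by ``with a computer.''
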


As a quick application of this monoid, consider the following example. The data of the monoid $M_{\mathcal{S}}$ and the relations over $H$ of its elements defines a functor $S \colon M_{\mathcal{S}} \to \mathbf{Rel}$. Let $\Delta$ define the order of the ordinal number $\mathbf{2}$, i.e. the category with only two objects $X$ and $Y$ and only one non-trivial morphism $F \colon X \to Y$ between them, and let $R$ be the functor from $\Delta$ to $\mathbf{Rel}$ which sends the objects $X$ and $Y$ of $\Delta$ to the singletons $\{x\}$ and $\{y\}$.
Let $\mathbf{C}$ be the $M_{\mathcal{P}_{1,0}}$-monoid with the above-defined functor $S \colon M_\mathcal{S} \to \mathbf{Rel}$.
Finally, let $\phi$ be the left-total lax natural transformation which sends $x$ to $A_M$, and $y$ to ${F_\sharp}_M$.
We are interested in the possible functors $F$ such that $(R,S,F,\phi)$ is a relational PK-Net describing the relation between $A_M$ and ${F_\sharp}_M$.
A rapid verification through the elements of $M_\mathcal{S}$ yields that only $F(f)=\mathcal{S}^3$ and $F(f)=\mathcal{S}^5$ yield valid choices for $F$.
Observe that these relations correspond to the first two shortest distances between $A_M$ and ${F_\sharp}_M$ in the Cube Dance of Figure \ref{fig:CubeDance}.
A possible path for the progression from $A_M$ to ${F_\sharp}_M$ by three successive semitone displacements is given in Figure \ref{fig:S-Progression}, which shows the successive relations $A_M \mathcal{S} F_\text{aug}$, $F_\text{aug} \mathcal{S} {B_\flat}_m$, and ${B_\flat}_m \mathcal{S} {F_\sharp}_m$.

As introduced in Section 1.3, Douthett also studied the parsimonious graph induced by the $\mathcal{P}_{2,0}$ relation on the set of major, minor, and augmented triads (the ``Weitzmann's Waltz'' graph shown in Figure \ref{fig:WeitzmannWaltz}). The $\mathcal{P}_{2,0}$ relation relates two pitch-class sets if one can be obtained from the other by the displacement of two pitch-classes by a semitone each, which includes both the case of the parallel displacement of these pitch classes, as well as their contrary movement. 

As before, we rename the $\mathcal{P}_{2,0}$ relation as $\mathcal{T}$ for clarity of notation, and we recall its specific definition on the set of major, minor, and augmented triads. 

\begin{definition}
Let $H$ be the set of the 24 major and minor triads and the four augmented triads, i.e. $H=\{n_M, 0 \leq n \leq 11\} \cup \{n_m, 0 \leq n \leq 11\} \cup \{n_\text{aug}, 0 \leq n \leq 3\}$. The $\mathcal{T}$ relation over $H$ is defined as the symmetric relation such that we have
\begin{itemize}
\setlength\itemsep{0.05em}
\item{$n_M \mathcal{T} (n+4)_M$, $n_M \mathcal{T} (n+8)_M$, $n_M \mathcal{T} (n+1)_m$,  $n_M \mathcal{T} (n+5)_m$, and $n_M \mathcal{T} ((n+3) \pmod{4})_\text{aug}$ for $0 \leq n \leq 11$, and}
\item{$n_m \mathcal{T} (n+4)_m$, $n_m \mathcal{T} (n+8)_m$, $n_m \mathcal{T} (n+11)_M$,  $n_m \mathcal{T} (n+7)_M$, and $n_m \mathcal{T} (n \pmod{4})_\text{aug}$ for $0 \leq n \leq 11$.}
\end{itemize}
\end{definition}

As before, the structure of the monoid $M_\mathcal{T}$ generated by the relation $\mathcal{T}$ can easily be determined.

\begin{proposition}
The monoid $M_\mathcal{T}$ generated by the relation $\mathcal{T}$ has for presentation
$$M_\mathcal{T} = \langle \mathcal{T} \mid \mathcal{T}^4= \mathcal{T}^3 \rangle.$$
\end{proposition}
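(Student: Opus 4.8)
The plan is to recognise $M_{\mathcal{T}}$ as a monogenic (one‑generated) monoid and to determine its index and period. Recall that every monoid generated by a single element is isomorphic to exactly one monoid of the form $\langle x \mid x^{\,i+p}=x^{\,i}\rangle$, where the \emph{index} $i\ge 0$ is the least integer with $x^{\,i}=x^{\,j}$ for some $j>i$ and the \emph{period} $p\ge 1$ is the least integer with $x^{\,i+p}=x^{\,i}$; this monoid has exactly $i+p$ elements $e,x,\dots,x^{\,i+p-1}$. Hence, to get the stated presentation it suffices to establish two facts about the relation $\mathcal{T}$ on the $28$‑element set $H$: \textbf{(a)} $\mathcal{T}^{4}=\mathcal{T}^{3}$ (as subsets of $H\times H$), and \textbf{(b)} the four relations $\mathrm{id}_H=e,\ \mathcal{T},\ \mathcal{T}^{2},\ \mathcal{T}^{3}$ are pairwise distinct. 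Indeed (a) forces the period to be $1$ and the index to be at most $3$, while (b) then forces the index to be exactly $3$.

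To verify (a) and (b) I would compute the relevant composites directly, cutting the work down by symmetry. Since $\mathcal{T}$ is symmetric it suffices, for each chord $c\in H$, to compare the forward neighbourhoods $\mathcal{T}^{3}(c)$ and $\mathcal{T}^{4}(c)$; and since $\mathcal{T}$ commutes with the semitone transposition $T_1$ acting on $H$ (which permutes the twelve major triads in a $12$‑cycle, the twelve minor triads in a $12$‑cycle, and the four augmented triads in a $4$‑cycle), the set $H$ decomposes into just three $T_1$‑orbits, so it is enough to treat one representative of each, say $c\in\{0_M,0_m,0_\text{aug}\}$. Starting from the explicit neighbour lists in the definition of $\mathcal{T}$ and unioning iteratively, one finds for instance $\mathcal{T}(0_M)=\{4_M,8_M,1_m,5_m,3_\text{aug}\}$, then $\mathcal{T}^{2}(0_M)=\{0_M,4_M,8_M\}\cup\{n_m:\text{$n$ odd}\}\cup\{1_\text{aug},3_\text{aug}\}$, then $\mathcal{T}^{3}(0_M)=\{n_M:\text{$n$ even}\}\cup\{n_m:\text{$n$ odd}\}\cup\{1_\text{aug},3_\text{aug}\}$, and finally $\mathcal{T}^{4}(0_M)$ equals this same set; the analogous computations for $0_m$ and $0_\text{aug}$ likewise stabilise at the third step. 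This gives (a). For (b) one extracts witnesses from these neighbourhoods: $\mathcal{T}$ is irreflexive, so $\mathcal{T}\ne e$ (e.g.\ $(0_M,4_M)\in\mathcal{T}$); since $\mathcal{T}$ is symmetric and left‑total, $\mathcal{T}^{2}$ contains the diagonal of $H$, so $(0_M,0_M)\in\mathcal{T}^{2}\setminus\mathcal{T}$ gives $\mathcal{T}\ne\mathcal{T}^{2}$; and $(0_M,2_M)\in\mathcal{T}^{3}\setminus\mathcal{T}^{2}$ gives $\mathcal{T}^{2}\ne\mathcal{T}^{3}$. Distinguishing consecutive powers in this way suffices: the remaining inequalities among $e,\mathcal{T},\mathcal{T}^2,\mathcal{T}^3$ then follow from (a), since a coincidence such as $e=\mathcal{T}^2$, $e=\mathcal{T}^3$ or $\mathcal{T}=\mathcal{T}^3$ would, combined with $\mathcal{T}^4=\mathcal{T}^3$ and $\mathcal{T}\ne e$, lead to a contradiction.

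Putting these together, $M_{\mathcal{T}}$ has exactly the four elements $e,\mathcal{T},\mathcal{T}^{2},\mathcal{T}^{3}$ and satisfies $\mathcal{T}^{4}=\mathcal{T}^{3}$, so the canonical surjection $\langle \mathcal{T}\mid\mathcal{T}^{4}=\mathcal{T}^{3}\rangle\twoheadrightarrow M_{\mathcal{T}}$ between two four‑element monoids is an isomorphism, which is the claim. I expect the only real obstacle to be the clerical one of carrying out the relational compositions on the $28$‑element set $H$; the $T_1$‑equivariance keeps this short, and in any case it is a purely mechanical verification that could be delegated to a computer algebra system, exactly as is done elsewhere in the paper for $M_{\mathcal{UPL}}$. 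There is no conceptual difficulty beyond invoking the classification of monogenic monoids.
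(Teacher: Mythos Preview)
Your proposal is correct and follows essentially the same approach as the paper, which does not give a detailed proof but simply states that the structure ``can easily be determined by hand or with a computer''; your write-up makes that direct verification explicit by invoking the classification of monogenic monoids and exploiting the $T_1$-equivariance of $\mathcal{T}$ to reduce the case analysis. The computations you sketch (in particular the stabilisation $\mathcal{T}^3(0_M)=\mathcal{T}^4(0_M)$ and the witnesses distinguishing $e,\mathcal{T},\mathcal{T}^2,\mathcal{T}^3$) check out.
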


\begin{figure}
\centering
\begin{center}
\begin{tikzpicture}[scale=1.1]
	\node (A) at (-4,1) {};
	\node (B) at (-3,1) {};
	\draw[->,>=latex] (A) to node[above,midway]{$\mathcal{S}$} (B) ;
	\node (C) at (-4,0.3) {};
	\node (D) at (-3,0.3) {};
	\draw[->,>=latex,dashed] (C) to node[above,midway]{$\mathcal{T}$} (D) ;

	\node (e) at (0,0) {$e$};
	\node (T) at (4,-2) {$\mathcal{T}$};
	\draw[->,>=latex,dashed] (e) to (T) ;
	\node (S) at (-4,-2) {$\mathcal{S}$};
	\draw[->,>=latex] (e) to (S) ;
	\node (TS) at (0,-2) {$\mathcal{TS}$};
	\draw[->,>=latex] (T) to (TS) ;
	\draw[->,>=latex,dashed] (S) to (TS) ;
	\node (S2) at (-2,-5) {$\mathcal{S}^2$};
	\node (T2) at (2,-5) {$\mathcal{T}^2$};
	\draw[->,>=latex] (S) to (S2) ;
	\draw[->,>=latex] (S2) to (TS) ;
	\draw[->,>=latex,dashed] (S2) to (T2) ;
	\draw[->,>=latex] (TS) to (T2) ;
	\draw[->,>=latex,dashed] (T) to (T2) ;
	\node (ST2) at (1.5,-3) {$\mathcal{ST}^2$};
	\node (T3) at (2.75,-2.85) {$\mathcal{T}^3$};
	\draw[->,>=latex,dashed] (TS) to (ST2) ;
	\draw[->,>=latex,dashed] (T2) to (T3) ;
	\draw[->,>=latex] (T2) to (ST2) ;
	\draw[<->,>=latex] (ST2) to (T3) ;
	\draw[->,>=latex,dashed] (ST2) to [out=50,in=95,looseness=8] (ST2) ;
	\draw[->,>=latex,dashed] (T3) to [out=40,in=90,looseness=7] (T3) ;
\end{tikzpicture}
\end{center}
\caption{The Cayley graph of the monoid generated by the relations $\mathcal{S}$ and $\mathcal{T}$. }
\label{fig:STCayley}
\end{figure}
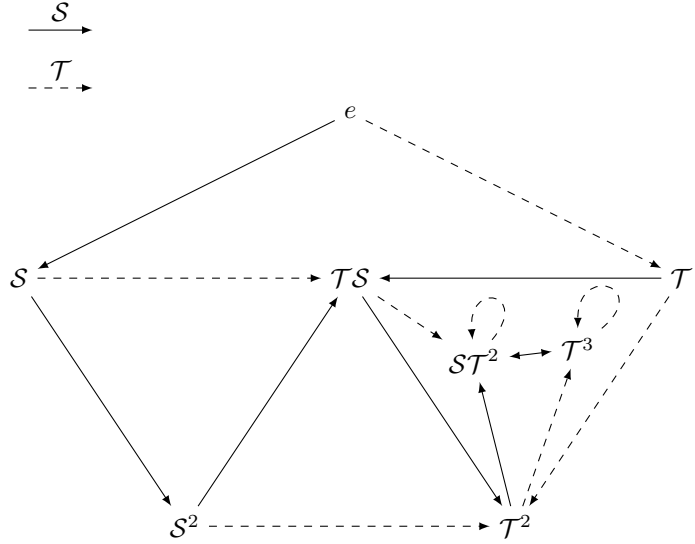

Transformational analysis using relational PK-Nets can be performed in the context of the monoid $M_\mathcal{T}$, but it should be noticed that given two chords $x$ and $y$ there may not always exist a relation $\mathcal{R}$ in $M_\mathcal{T}$ such that $x \mathcal{R} y$.
Therefore, it may be interesting to combine both the $\mathcal{S}$ and the $\mathcal{T}$ relations in order to describe the relations between chords by a series of one or two pitch class movements by semitones. We are thus interested in the structure of the monoid $M_\mathcal{ST}$ generated by the relations $\mathcal{S}$ and $\mathcal{T}$. 

\begin{proposition}
The monoid $M_{\mathcal{ST}}$ generated by the relations $\mathcal{S}$, and $\mathcal{T}$ has for presentation
$$M_{\mathcal{ST}} = \langle \mathcal{S}, \mathcal{T} \mid \mathcal{TS}=\mathcal{ST}, \hspace{0.2cm} \mathcal{S}^3=\mathcal{ST}, \hspace{0.2cm} \mathcal{T}^4=\mathcal{T}^3, \hspace{0.2cm} \mathcal{TS}^2=\mathcal{T}^2, \hspace{0.2cm} \mathcal{ST}^3=\mathcal{ST}^2 \rangle,$$
and contains eight elements.
\end{proposition}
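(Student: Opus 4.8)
The plan is to prove the proposition in three stages: verify that the five relators actually hold as identities of relations on $H$, bound the presented monoid from above by a normal-form argument, and then separate the resulting normal forms by computing them explicitly. For the first stage, each relator is an equality between composite relations on the $28$-element set $H$; writing $\mathcal{S}$ and $\mathcal{T}$ out as subsets of $H\times H$ from their definitions and composing them as in Section 2.1, one checks $\mathcal{TS}=\mathcal{ST}$, $\mathcal{S}^3=\mathcal{ST}$, $\mathcal{T}^4=\mathcal{T}^3$, $\mathcal{TS}^2=\mathcal{T}^2$ and $\mathcal{ST}^3=\mathcal{ST}^2$ by splitting into the cases of major, minor and augmented triads and their residues modulo $12$ (resp.\ $4$). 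This is a finite verification, done by hand or, as for the preceding propositions, with GAP. Its consequence is a surjective monoid homomorphism $\pi\colon \widehat{M}\twoheadrightarrow M_{\mathcal{ST}}$, where $\widehat{M}$ denotes the abstract monoid defined by the presentation.

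Next I would show $|\widehat{M}|\le 8$ by a rewriting argument. Since $\mathcal{TS}=\mathcal{ST}$, the monoid $\widehat{M}$ is commutative, so every element is represented by a monomial $\mathcal{S}^a\mathcal{T}^b$. The remaining relators give the rules $\mathcal{S}^3\to\mathcal{ST}$ (which replaces $\mathcal{S}^a\mathcal{T}^b$ by $\mathcal{S}^{a-2}\mathcal{T}^{b+1}$ when $a\ge 3$), $\mathcal{TS}^2\to\mathcal{T}^2$ (which replaces $\mathcal{S}^2\mathcal{T}^b$ by $\mathcal{T}^{b+1}$ when $b\ge 1$), $\mathcal{T}^4\to\mathcal{T}^3$ and $\mathcal{ST}^3\to\mathcal{ST}^2$. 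Applying these, an arbitrary $\mathcal{S}^a\mathcal{T}^b$ is driven to $a\le 2$; the case $a=2$ with $b\ge 1$ collapses onto a pure $\mathcal{T}$-power, pure powers of $\mathcal{T}$ reduce to exponent $\le 3$, and for $a=1$ the $\mathcal{T}$-exponent is forced down to $\le 2$. Hence every element of $\widehat{M}$ equals one of
\[
e,\quad \mathcal{S},\quad \mathcal{S}^2,\quad \mathcal{T},\quad \mathcal{T}^2,\quad \mathcal{T}^3,\quad \mathcal{ST},\quad \mathcal{ST}^2,
\]
so $|\widehat{M}|\le 8$.

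Finally I would show that the $\pi$-images of these eight monomials are pairwise distinct in $M_{\mathcal{ST}}$. Computing each of $e=\mathrm{id}_H$, $\mathcal{S}$, $\mathcal{S}^2$, $\mathcal{T}$, $\mathcal{T}^2$, $\mathcal{T}^3$, $\mathcal{ST}$, $\mathcal{ST}^2$ as an explicit subset of $H\times H$ — for instance by recording the image under each of a single fixed major triad together with its cardinality — one checks that no two coincide. Then $\pi$ is injective, hence bijective, $M_{\mathcal{ST}}$ has exactly eight elements, and the presentation is exact.

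The main obstacle is the second stage: one must be certain that the rewriting terminates precisely at the eight listed monomials and that no interleaving of the rules produces a further collapse that would make $|\widehat{M}|<8$. The clean way to guarantee this is to complete the rewriting system to a confluent one (equivalently, to describe directly the congruence on $\mathbb{N}^2$ generated by the four commutative relators), so that normal forms are unique; in any case the explicit distinctness check of the third stage certifies after the fact that no hidden identity has been overlooked, so the two bounds on $|\widehat{M}|$ meet at $8$.
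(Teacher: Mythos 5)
Your proposal is correct, and it is essentially the approach the paper intends: the paper offers no explicit proof of this proposition beyond an appeal to exhaustive enumeration by hand or with GAP, and your three stages (verify the relators as identities of relations on the $28$-element set $H$, reduce words in the presented monoid to the eight normal forms $e,\mathcal{S},\mathcal{S}^2,\mathcal{T},\mathcal{T}^2,\mathcal{T}^3,\mathcal{ST},\mathcal{ST}^2$, and check these are pairwise distinct as subsets of $H\times H$) are exactly the rigorous scaffolding around that computation. Your closing remark is also right that confluence need not be established separately, since termination of the rewriting (e.g.\ via the strictly decreasing measure $2a+b$) gives the upper bound $|\widehat{M}|\le 8$ and the explicit distinctness check supplies the matching lower bound.
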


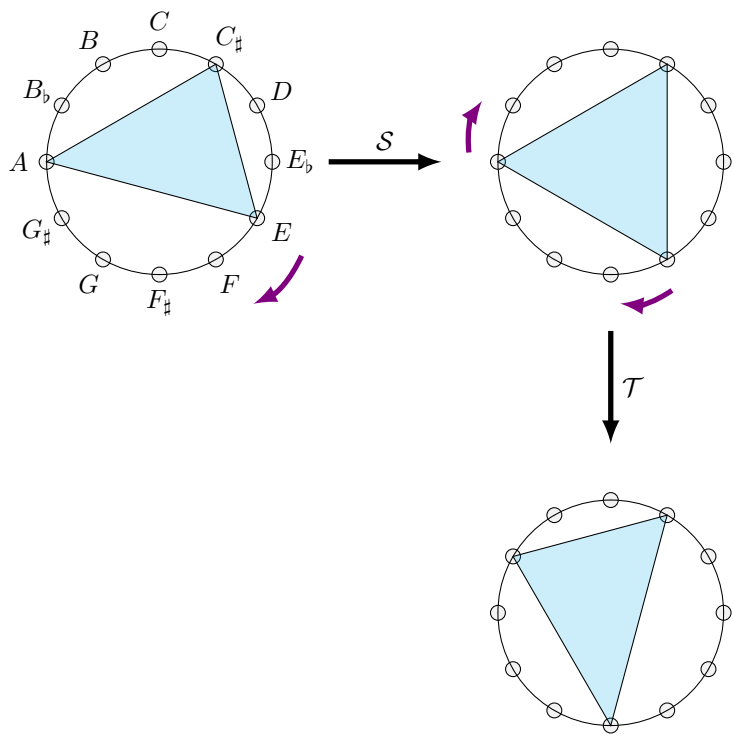
\begin{figure}
\begin{center}
\begin{tikzpicture}[scale=0.75]
%% red, green, blue, cyan , magenta, yellow, black, gray, darkgray, lightgray, brown, lime, olive, orange, pink, purple, teal, violet and white.
	\draw (-5,0) circle[radius=2.0]; 
	\foreach \i in {0,...,11} {
		\node[circle,draw=black,fill=black, fill opacity = 0.05, inner sep=1pt, minimum size=3pt] (A\i) at ({-5.0+2.0*sin(\i*30)},{2.0*cos(\i*30)}) {.};
	}
	\foreach \i in {0,...,11} {
		\node (B\i) at ({-5.0+3.0*sin(\i*30)},{3.0*cos(\i*30)}) {};
	}
	\foreach \i / \name in {0/$C$,1/$C_\sharp$,2/$D$,3/$E_\flat$,4/$E$,5/$F$,6/$F_\sharp$,7/$G$,8/$G_\sharp$,9/$A$,10/$B_\flat$,11/$B$} {
		\node (N\i) at ({-5.0+2.5*sin(\i*30)},{2.5*cos(\i*30)}) {\name};
	}
	\draw[ draw=black, fill=cyan, fill opacity=0.2] (A9.center) -- (A1.center) -- (A4.center) --cycle;
	\draw[ -latex, line width=2, color=violet] (B4) to[bend left=20] (B5); 
	
	\draw[->,>=latex, color=black,line width=2,] (-2.0,0.0) to node[above,midway]{$\mathcal{S}$} (0,0) ;
	
	\draw (3,0) circle[radius=2.0]; 
	\foreach \i in {0,...,11} {
		\node[circle,draw=black,fill=black, fill opacity = 0.05, inner sep=1pt, minimum size=3pt] (C\i) at ({3+2.0*sin(\i*30)},{2.0*cos(\i*30)}) {.};
	}
	\foreach \i in {0,...,11} {
		\node (D\i) at ({3+2.5*sin(\i*30)},{2.5*cos(\i*30)}) {};
	}
	\draw[ draw=black, fill=cyan, fill opacity=0.2] (C9.center) -- (C1.center) -- (C5.center) --cycle;
	\draw[ -latex, line width=2, color=violet] (D5) to[bend left=20] (D6); 
	\draw[ -latex, line width=2, color=violet] (D9) to[bend left=20] (D10); 
	
	\draw[->,>=latex, color=black,line width=2,] (3.0,-3.0) to node[right,midway]{$\mathcal{T}$} (3.0,-5.0) ;
	
	\draw (3.0,-8) circle[radius=2.0]; 
	\foreach \i in {0,...,11} {
		\node[circle,draw=black,fill=black, fill opacity = 0.05, inner sep=1pt, minimum size=3pt] (E\i) at ({3.0+2.0*sin(\i*30)},{-8+2.0*cos(\i*30)}) {.};
	}
	\foreach \i in {0,...,11} {
		\node (F\i) at ({3.0+2.5*sin(\i*30)},{-8+2.5*cos(\i*30)}) {};
	}
	\draw[ draw=black, fill=cyan, fill opacity=0.2] (E6.center) -- (E10.center) -- (E1.center) --cycle;
	
\end{tikzpicture}
\end{center}
\caption{Transformational analysis of the progression $A$ major to $F_\sharp$ major in the context of the $M_{\mathcal{ST}}$ monoid. The semitone changes are indicated by the violet arrows.}
\label{fig:ST-Progression}
\end{figure}

The Cayley graph of this monoid is represented on Figure \ref{fig:STCayley}. As a quick application of this monoid, consider the above mentionned example for the relation between $A_M$ and ${F_\sharp}_M$ and let $\mathbf{C}$ be the new monoid $M_\mathcal{ST}$. An enumeration of the elements of this monoid yields that only $F(f)=\mathcal{TS}$ and $F(f)=\mathcal{ST}^2$ yield valid choices for $F$. A possible path for the progression from $A_M$ to ${F_\sharp}_M$ is shown in Figure \ref{fig:ST-Progression}, which shows the successive relations $A_M \mathcal{S} F_\text{aug}$ and $F_\text{aug} \mathcal{T} {F_\sharp}_m$.

\section{Conclusions}

We have presented in this work a new framework, called relational PK-Nets, in which we consider diagrams in $\mathbf{Rel}$ rather than $\mathbf{Sets}$ as an extension of our previous work on PK-Nets. We have shown how relational PK-Nets capture both the group-theoretical approach and the relational approach of transformational music theory. In particular, we have revisited the parsimonious relations $\mathcal{P}_{m,n}$ of Douthett by studying the structure of monoids based on the $\mathcal{P}_{1,0}$ or $\mathcal{P}_{2,0}$ relations (or subrelations of these), and their corresponding functors to $\mathbf{Rel}$ relating major, minor, and augmented triads. Further perspectives of relational PK-Nets include their integration for computational music theory, providing a way for the systematic analysis of music scores.

\section*{Annex}

It is known that the action of a group, or more generally of a category $\mathbf{C}$, on a set is in 1-1 correspondence with a discrete fibration over $\mathbf{C}$. When replacing functors from $\mathbf{C}$ to $\mathbf{Sets}$ by lax functors to $\mathbf{Rel}$ we have the following proposition.

\begin{proposition}
 There is a 1-1 correspondence $h$ from the set of lax functors $\mathbf{C} \to \mathbf{Rel}$ to the set of faithful functors to $\mathbf{C}$.
\end{proposition}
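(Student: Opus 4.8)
The plan is to adapt the classical Grothendieck ``category of elements'' construction to the lax relational setting and to build its inverse by hand. For the forward map $h$, given a lax functor $S \colon \mathbf{C} \to \mathbf{Rel}$ I would form the category $\mathcal{E}_S$ whose objects are the pairs $(c,x)$ with $c$ an object of $\mathbf{C}$ and $x \in S(c)$, and whose morphisms $(c,x) \to (c',x')$ are the morphisms $f \colon c \to c'$ of $\mathbf{C}$ satisfying $x\, S(f)\, x'$, with composition inherited from $\mathbf{C}$. The inclusion $S(g)S(f) \subseteq S(gf)$ is exactly what makes a composite of $\mathcal{E}_S$-morphisms again an $\mathcal{E}_S$-morphism, and the unit clause of the lax functor (the identity relation on $S(c)$ being contained in $S(\mathrm{id}_c)$, which I would make explicit since the definition in the text only records the composition clause) supplies the identities. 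One then sets $h(S)$ to be the projection $\pi_S \colon \mathcal{E}_S \to \mathbf{C}$, $(c,x) \mapsto c$ and $f \mapsto f$; this $\pi_S$ is faithful essentially for free, since by construction $\mathcal{E}_S\big((c,x),(c',x')\big)$ is a subset of $\mathbf{C}(c,c')$.

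For the inverse I would send a faithful functor $P \colon \mathcal{E} \to \mathbf{C}$ to the lax functor $S_P \colon \mathbf{C} \to \mathbf{Rel}$ defined by letting $S_P(c)$ be the fibre $P^{-1}(c)$ and, for $f \colon c \to c'$, declaring $e\, S_P(f)\, e'$ whenever there is a morphism $\tilde f \colon e \to e'$ in $\mathcal{E}$ with $P(\tilde f) = f$. Laxity comes from the fact that a composite of lifts is a lift, so $S_P(g)S_P(f) \subseteq S_P(gf)$, while identity morphisms of $\mathcal{E}$ lift identities of $\mathbf{C}$, which takes care of the unit clause. Here faithfulness of $P$ plays the decisive role: it forces the lift $\tilde f$ above to be unique whenever it exists, so that $S_P$ records $P$ itself rather than a proper quotient of it; and since every projection $\pi_S$ is already faithful, the forward map does land in the stated set.

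The remaining work is to check that the two constructions are mutually inverse. On one side, $S_{\pi_S}(c) = \pi_S^{-1}(c) = \{(c,x) : x \in S(c)\}$ is in canonical bijection with $S(c)$ via $(c,x) \mapsto x$, and under this bijection $S_{\pi_S}(f)$ matches $S(f)$ because $f \colon (c,x) \to (c',x')$ is a morphism of $\mathcal{E}_S$ precisely when $x\, S(f)\, x'$. On the other side, the assignment $e \mapsto (P(e),e)$ on objects and $\tilde f \mapsto P(\tilde f)$ on morphisms is a well-defined functor $\Psi \colon \mathcal{E} \to \mathcal{E}_{S_P}$, and faithfulness of $P$ makes $\Psi$ an isomorphism of categories with $\pi_{S_P} \circ \Psi = P$. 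I expect the only real friction to be a matter of framing rather than of mathematics: because each fibre element carries a tag recording its image object, the two round trips are identities only up to these canonical identifications, so the statement should be read with ``$1$-$1$ correspondence'' understood up to isomorphism of categories over $\mathbf{C}$ (equivalently, by taking the category of elements as the canonical representative on the fibration side). The one step I would write out with care is the equivalence between one-sided inclusion of composites (unit clause included) and closure of the set of lifts under composition, since that is where the relational, lax hypotheses are actually used.
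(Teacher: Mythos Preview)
Your proposal is correct and follows essentially the same approach as the paper: both build the Grothendieck-style category of elements $\mathcal{E}_S$ (the paper's $\mathbf{H(S)}$) with projection to $\mathbf{C}$ in one direction, and read off a lax functor from the fibres of a faithful functor in the other. Your version is in fact more careful than the paper's, since you explicitly address the unit clause, explain where faithfulness is actually used, verify both round trips, and flag that the bijection holds only up to the canonical isomorphism over $\mathbf{C}$---points the paper leaves implicit.
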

\begin{proof}
\begin{itemize}
\setlength\itemsep{0.05em}
\item{
Let $S \colon \mathbf{C} \to \mathbf{Rel}$ be a lax functor. We define a faithful functor $h(S) \colon \mathbf{H(S)}$ to $\mathbf{C}$ as follows.
The category $\mathbf{H(S)}$ is the category which has the elements $s$ of the sets $S(e)$ as objects, with $e$ being an object of $\mathbf{C}$, and triplets $(s,g,s')$ as morphisms, with $g \colon e \to e'$ being a morphism of $\mathbf{C}$, $s$ being an element of $S(e)$, $s'$ being an element of $S(e')$, and such that we have $s S(g) s'$.
The composition of two morphisms $(s,g,s')$ and $(u,g',s'')$ is defined as $(u,g',s'')(s,g,s') = (s,g'g,s'')$ iff $s'=u$.
The resulting morphism is well-defined since $s S(g) s'$ and $s' S(g') s''$ imply $s S(g'g) s''$ by definition of the composition of relations.
The associated functor $h(S)$ to $S$ is the functor from $\mathbf{H(S)}$ to $\mathbf{C}$ which sends an object $s$ of $\mathbf{H(S)}$ to the associated object $e$ of $\mathbf{C}$, and sends a morphism $(s,g,s')$ of $\mathbf{H(S)}$ to the corresponding morphism $g$ of $\mathbf{C}$.
This associated functor $h(S) \colon \mathbf{H(S)} \to \mathbf{C}$ is faithful.
}
\item{
Conversely, a faithful functor $k \colon \mathbf{K} \to \mathbf{C}$ is the image by $h$ of the lax functor $S’$ such that $S’(e)$ consists of the objects $s$ of $K$ mapped on $e$ by $k$, and the relation $S'(g)$ for $g \colon e \to e'$ is defined by $s S'(g) s'$ iff there is an $f \colon s \to s'$ in $K$ with $k(f) = g$.
}
\end{itemize}
\end{proof}

Let $\text{LDiag}(\mathbf{Rel})$  be the category of lax diagrams to $\mathbf{Rel}$ which has for objects the lax functors to $\mathbf{Rel}$ and for morphisms from $S \colon \mathbf{C} \to \mathbf{Rel}$ to $S' \colon \mathbf{C'} \to \mathbf{Rel}$ the couples
$(L,\lambda)$ where $L \colon \mathbf{C} \to \mathbf{C'}$ is a functor and $\lambda \colon S \to S'L$ is a natural transformation (i.e., is a lax natural transformation such that $\lambda_e: S(e) \to S'L(e)$ is a map for each object $e$ of $\mathbf{C}$). Let $\mathbf{Faith}$ be the category having for objects the faithful functors between small categories and for morphisms the commutative squares of functors between them.

\begin{proposition}
The 1-1 correspondence $h$ extends into an isomorphism from the category $\text{LDiag}(\mathbf{Rel})$ to the category $\mathbf{Faith}$.
\end{proposition}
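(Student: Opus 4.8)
The plan is to upgrade the object-level bijection $h$ of the previous proposition to a functor and then observe that it is bijective on each hom-set, hence an isomorphism of categories. Since $h$ is already a bijection on objects, it suffices to define $h$ on morphisms, check it lands in $\mathbf{Faith}$, check it is compatible with composition and identities, and exhibit an explicit inverse on morphisms.

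First I would define the action of $h$ on a morphism $(L,\lambda)\colon S\to S'$, where $S\colon\mathbf{C}\to\mathbf{Rel}$ and $S'\colon\mathbf{C'}\to\mathbf{Rel}$. Its image should be the commutative square consisting of $L$, the faithful functors $h(S)$ and $h(S')$, and a functor $\mathbf{H}(L,\lambda)\colon\mathbf{H(S)}\to\mathbf{H(S')}$ sending an object $s\in S(e)$ to $\lambda_e(s)\in S'L(e)$ and a morphism $(s,g,s')$ with $g\colon e\to e'$ to $(\lambda_e(s),L(g),\lambda_{e'}(s'))$. The point that needs verification is that $(\lambda_e(s),L(g),\lambda_{e'}(s'))$ is genuinely a morphism of $\mathbf{H(S')}$, i.e. that $s\,S(g)\,s'$ forces $\lambda_e(s)\,S'L(g)\,\lambda_{e'}(s')$. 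This is exactly the laxness inclusion $\lambda_{e'}S(g)\subseteq S'L(g)\lambda_e$ defining a lax natural transformation, used together with the fact that each $\lambda_e$ is an honest function: from $s\,S(g)\,s'$ we get $(s,\lambda_{e'}(s'))\in\lambda_{e'}S(g)\subseteq S'L(g)\lambda_e$, and the intermediate element is forced to be $\lambda_e(s)$ because $\lambda_e$ is single-valued. Functoriality of $\mathbf{H}(L,\lambda)$ and commutativity of the square are then immediate from $L$ being a functor and from the composition rule $(u,g',s'')(s,g,s')=(s,g'g,s'')$ defining $\mathbf{H(S')}$; compatibility of $h$ with composites and identities of $\text{LDiag}(\mathbf{Rel})$ is a routine check using whiskering of $\lambda$ along $L$.

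Next I would construct the inverse on morphisms. Given a commutative square in $\mathbf{Faith}$ with faithful legs $k\colon\mathbf{K}\to\mathbf{C}$ and $k'\colon\mathbf{K'}\to\mathbf{C'}$, top functor $M\colon\mathbf{K}\to\mathbf{K'}$ and bottom functor $L$, and letting $S,S'$ be the lax functors associated to $k,k'$, I set $\lambda_e(s)=M(s)$ for $s$ an object of $\mathbf{K}$ over $e$; since $k'(M(s))=L(k(s))=L(e)$ this is a function $S(e)\to S'L(e)$. That $\lambda$ is a lax natural transformation from $S$ to $S'L$ is the converse of the above: if $s\,S(g)\,s'$ there is a unique $f\colon s\to s'$ in $\mathbf{K}$ over $g$ (uniqueness by faithfulness of $k$), hence $M(f)\colon M(s)\to M(s')$ lies over $L(g)$, which gives $\lambda_e(s)\,S'L(g)\,\lambda_{e'}(s')$ and thus the inclusion $\lambda_{e'}S(g)\subseteq S'L(g)\lambda_e$. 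The two constructions are mutually inverse because $k'$ is faithful: a morphism of $\mathbf{K'}$ is determined by its source, its target, and its $k'$-image, so $M$ is recovered from $L$ and the components $\lambda_e$ by sending each $f$ over $g$ to the unique morphism $\lambda_e(s)\to\lambda_{e'}(s')$ over $L(g)$; conversely, starting from $(L,\lambda)$ and running $h$ returns $\lambda$ on objects by construction.

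I expect the only non-formal point to be making precise this equivalence between ``the laxness inclusion for $\lambda$'' and ``$M$ extends to morphisms'': one must check that the morphism of $\mathbf{K'}$ over $L(g)$ with prescribed endpoints $\lambda_e(s),\lambda_{e'}(s')$ is guaranteed to exist precisely when $\lambda$ satisfies $\lambda_{e'}S(g)\subseteq S'L(g)\lambda_e$, and that the resulting $M$ automatically respects composites and identities, which follows from the uniqueness of lifts under the faithful functor $k'$. Everything else is bookkeeping with the explicit descriptions of $\mathbf{H(S)}$ and of $h$ already established in the previous proposition.
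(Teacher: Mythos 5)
Your proposal is correct and follows essentially the same route as the paper: the paper's proof likewise defines $h$ on a morphism $(L,\lambda)$ by sending $(s,g,s')$ to $(\lambda_e(s),L(g),\lambda_{e'}(s'))$. In fact your write-up supplies the verifications the paper leaves implicit --- that the image triple really is a morphism of $\mathbf{H(S')}$ (via the laxness inclusion $\lambda_{e'}S(g)\subseteq S'L(g)\lambda_e$ together with single-valuedness of $\lambda_e$), and the explicit inverse on morphisms using faithfulness of $k'$ --- so nothing is missing.
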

\begin{proof}
Let $(L,\lambda) \colon S \to S'$ be a morphism of $\text{LDiag}(\mathbf{Rel})$. We define a functor $h(L, \lambda) \colon \mathbf{H(S)} \to \mathbf{H(S')}$ which maps $(s, g, s')$ on $(\lambda_e(s), L(g), \lambda_{e'}(s'))$.
Thus $(L, h(L, \lambda))$ defines a commutative square of functors from $h(S)$ to $h(S')$.
\end{proof}

%%%%%%%%%%%%%%%%%%%%%%%%%%%%%%%%%%%%

\label{lastpage}

\end{document}